\documentclass[12pt,reqno,twoside]{amsart}

\usepackage{amssymb,amsmath,amscd,enumerate,verbatim,extarrows}
\usepackage[colorlinks=true,linkcolor=blue,citecolor=blue]{hyperref}
\usepackage[latin1]{inputenc}
\usepackage{color}

\usepackage[all]{xy}
\usepackage{pstricks, pst-3d}
%

%
%

%
%
\newcommand{\mm}{\mathfrak m}
\newcommand{\nn}{\mathfrak n}

\newcommand{\qq}{\mathfrak q}



%
\newcommand{\Z}{\mathbb{Z}}

%
%

\newcommand{\Fc}{\mathcal{F}}
\newcommand{\Gc}{\mathcal{G}}

%
%

%
%

\DeclareMathOperator{\pnt}{\raise 0.5mm \hbox{\large\bf.}}

\DeclareMathOperator{\Coker}{Coker}

\DeclareMathOperator{\Img}{Im}

\DeclareMathOperator{\gr}{gr}

\DeclareMathOperator{\Tor}{Tor}

\DeclareMathOperator{\lind}{ld}
\DeclareMathOperator{\glind}{gl\,ld}
\DeclareMathOperator{\linp}{lin}
\DeclareMathOperator{\Ker}{Ker}

\DeclareMathOperator{\reg}{reg}

\DeclareMathOperator{\projdim}{pd}

\DeclareMathOperator{\Span}{Span}

\def\+#1{\relax\ifmmode\if\noexpand #1\relax \mathop{\kern
    0pt^+{#1}}\nolimits\else \kern 0pt^+\!#1 \fi\else$^*$#1\fi}

\let\:=\colon

%
%

\newtheorem{thm}{\bf Theorem}[section]
\newtheorem{lem}[thm]{\bf Lemma}
\newtheorem{cor}[thm]{\bf Corollary}
\newtheorem{prop}[thm]{\bf Proposition}

\theoremstyle{definition}
\newtheorem{defn}[thm]{\bf Definition}

\theoremstyle{plain}
\newtheorem*{thm*}{Theorem}
\newtheorem*{lem*}{Lemma}
\newtheorem*{cor*}{Corollary}
\newtheorem*{claim*}{Claim}
\newtheorem*{defn*}{Definition}

\theoremstyle{remark}
\newtheorem{rem}[thm]{Remark}

\newtheorem{ex}[thm]{Example}

%
%
\textwidth=17.2 cm \textheight=22 cm \topmargin=0.3 cm
\oddsidemargin=0.2 cm \evensidemargin=0.2 cm \footskip=40 pt

\numberwithin{equation}{section}
%
%


\title[Linearity defect and applications]{Notes on the linearity defect and applications}
\author{Hop D. Nguyen}
\address{Institut f\"ur Mathematik, Friedrich-Schiller-Universit\"at Jena\\ Ernst-Abbe-Platz 2, 07743 Jena}
\address{Dipartimento di Matematica, Universit\`a di Genova, Via Dodecaneso 35, 16146 Genoa, Italy}
\email{ngdhop@gmail.com}
\subjclass[2010]{13D02, 13H10, 13D05}
\keywords{Minimal free resolution; linearity defect; Castelnuovo-Mumford regularity; Koszul ring.}
\thanks{The author was partially supported by the CARIGE foundation.}

\begin{document}

\begin{abstract}
The linearity defect, introduced by Herzog and Iyengar, is a numerical measure for the complexity of minimal free resolutions. Employing a characterization of the linearity defect due to \c{S}ega, we study the behavior of linearity defect along short exact sequences. We point out two classes of short exact sequences involving Koszul modules, along which linearity defect behaves nicely. We also generalize the notion of Koszul filtrations from the graded case to the local setting. Among the applications, we prove that if $R\to S$ is a surjection of noetherian local rings such that $S$ is a Koszul $R$-module, and $N$ is a finitely generated $S$-module, then the linearity defect of $N$ as an $R$-module is the same as its linearity defect as an $S$-module. In particular, we confirm that specializations of absolutely Koszul algebras are again absolutely Koszul, answering positively a question due to Conca, Iyengar, Nguyen and R\"omer. 
\end{abstract}

\maketitle

\section{Introduction}
The linearity defect, introduced by Herzog and Iyengar \cite{HIy}, measures how 
far a module is from having a linear free resolution. The notion was inspired by 
work of Eisenbud, Fl\o ystad and Schreyer \cite{EFS} on free resolutions over 
the exterior algebra. Let us recall what this invariant is. Throughout, we will only work 
with a noetherian local ring $(R,\mm,k)$ with the unique maximal ideal $\mm$ and 
the residue field $k=R/\mm$, but with appropriate changes what 
we say will also cover the graded situation where $(R,\mm,k)$ is a standard graded $k$-algebra with the graded maximal ideal $\mm$. Sometimes, we omit $k$ and write simply $(R,\mm)$. 
Let $M$ denote a finitely generated $R$-module. Let the minimal free resolution 
of $M$ over $R$ be
\[
F: \cdots \longrightarrow F_i \xlongrightarrow{\partial} F_{i-1} \xlongrightarrow{\partial} \cdots \longrightarrow F_1\xlongrightarrow{\partial} F_0 \longrightarrow 0.
\]
By definition, the differential maps $F_i$ into $\mm F_{i-1}$. Then $F$ has a filtration $\Gc^{\pnt}F$ given by $(\Gc^nF)_i=\mm^{n-i}F_i$ for all $n,i$ (where $\mm^j=R$ if $j\le 0$), for which the map 
\[
(\Gc^nF)_i = \mm^{n-i}F_i \longrightarrow (\Gc^nF)_{i-1}=\mm^{n-i+1}F_{i-1}
\]
is induced by the differential $\partial$. The associated graded complex induced by the filtration $\Gc^{\pnt} F$, denoted by $\linp^R F$, is called the {\em linear part} of $F$. We define the {\em linearity defect} of $M$ as the number
\[
\lind_R M =\sup\{i:H_i(\linp^R F) \neq 0\}.
\]
By convention, the trivial module is set to have linearity defect $0$. We say that $M$ is a {\em Koszul module} if $\lind_R M=0$. Furthermore, $R$ is called a {\it Koszul ring} if $\lind_R k=0$. In the graded case, $R$ is a Koszul algebra (i.e. $k$ has a linear free resolution as an $R$-module) if and only if $R$ is a Koszul ring, or equivalently, if and only if $\lind_R k<\infty$ \cite{HIy}. This is reminiscent of the result due to Avramov-Eisenbud and Avramov-Peeva \cite{AE}, \cite{AP} saying that $R$ is a Koszul algebra if and only if $k$ has finite Castelnuovo-Mumford regularity $\reg_R k$. It is not clear whether the analogous statement for local rings, that $\lind_R k<\infty$ implies $R$ is Koszul, holds true; see \cite{AhR}, \cite{Se} for the recent progress on this question, and \cite{Ah}, \cite{EFS}, \cite{OY}, \cite{Ro}, \cite{Y1}, \cite{Y2} for some other directions of study. For recent surveys related to free resolutions and Koszul algebras, we refer to \cite{CDR} and \cite{PS}.

The linearity defect has some connections with the other invariants coming from minimal free resolutions. It is clear from the definition that $\lind_R M\le \projdim_R M$, where $\projdim_R M$ denotes the projective dimension of $M$. Moreover, in the graded case, if $\lind_R M$ is finite, then so is the Castelnuovo-Mumford regularity $\reg_R M$ (\cite[Proposition 1.12]{HIy}).  Nevertheless, compared with the projective dimension or the regularity, the linearity defect behaves much worse along short exact sequences. 

One of the main purposes of this paper is to analyze the behavior of linearity 
defect along short exact sequences. In commutative algebra, one 
usually uses short exact sequences to bound or compute numerical invariants of 
ideals and modules. Except for the componentwise linear modules (in the sense of Herzog and Hibi \cite{HH}) which have linearity 
defect $0$, not much is known about modules with larger linearity defect, even if the base ring is a polynomial ring. Looking from these perspectives, we hope that the main theorems of this paper (Proposition \ref{lind_exact_seq}, Theorems \ref{pure_extension} and \ref{small_inclusion}) would be useful for future research on such modules. 

\c{S}ega \cite[Theorem 2.2]{Se} proved the following characterization of the linearity defect. Denoting by $\tau_s$ the canonical surjection $R/\mm^{s+1} \longrightarrow R/\mm^s$ for each $s\ge 0$, then 
\[
\lind_R M=\inf \left \{t:  ~ \begin{aligned} &\textnormal{the map} ~ \Tor^R_i(R/\mm^{s+1},M)\xlongrightarrow{\Tor^R_i(\tau_s,M)} \Tor^R_i(R/\mm^s,M) ~\textnormal{is zero} \\
&\textnormal{for all $i>t$ and all $s\ge 0$}
\end{aligned} \right \}.
\] 
Using \c{S}ega's theorem, in Section \ref{sect_lind_exact_seq}, we establish general bounds on linearity defects of modules in a short exact sequence. The main technical result of the section as well as of this paper is Proposition \ref{lind_exact_seq}. The bounds in Proposition \ref{lind_exact_seq} involve correcting terms that might appear unnatural at first sight, but they are not dispensable (see Example \ref{ex_correcting_terms}). 

In Section \ref{sect_Koszul_modules}, we describe two kinds of short exact sequence involving Koszul modules along which the linearity defect behaves well (Theorems \ref{pure_extension} and \ref{small_inclusion}). The main results 
of Section \ref{sect_Koszul_modules} will be employed to study specializations of absolutely Koszul
rings (Corollary \ref{cor_aK}), modules with linear quotients (Proposition \ref{linear_quotients}), and intersection of three linear ideals (Theorem \ref{thm_3ideals}).

An efficient method to establish Koszulness of graded algebras is constructing 
Koszul filtrations \cite{CTV}; see also, e.g., \cite{Bl}, \cite{CRV}, 
\cite{HHR}. In Section \ref{sect_Koszul_filtrations}, we generalize this method from the graded case to the local setting.

Section \ref{sect_applications} is devoted to applications of the main technical results. In the first part of this section, we prove the following (at least to us) unexpected result.

\medskip

\noindent{\bf Theorem \ref{thm_ld0}.} {\em Let $(R,\mm)\to (S,\nn)$ be a 
surjection of 
local rings such that $\lind_R S=0$. Then for any finitely generated $S$-module 
$N$, there is an equality $\lind_R N=\lind_S N$.}

\medskip

Following \cite{IyR}, $R$ is said to be {\em absolutely Koszul} if every finitely generated $R$-module has a finite linearity defect. For instance, if $Q$ is a complete intersection of quadrics and $Q\to R$ 
is a Golod surjective map of graded $k$-algebras (i.e.~ either $Q=R$ or $\reg_Q R=1$), 
then $R$ is absolutely Koszul (see \cite[Proposition 5.8, Theorem 5.9]{HIy}). 
The reader may consult \cite{CINR}, \cite{IyR} for more examples and questions 
concerning absolutely Koszul rings. As a corollary of Theorem \ref{thm_ld0}, we 
show that absolutely Koszul algebras are stable under specialization. This answers in the 
positive a question raised in \cite[Remark 3.10]{CINR}. 

In the second part of Section \ref{sect_applications}, we introduce a local version of modules with linear 
quotients \cite{HT}, and prove that it enjoys the same property as in the 
graded case. This is a simple application of Theorem \ref{pure_extension} 
(strictly speaking, we only need a special case proved in \cite[Proposition 5.3]{GM}). In contrast to the belief 
expressed in \cite[Page 461, line 6-8]{AIS} that the filtration method
neither ``covers the local situation, nor gives information on the homological 
properties of finite $R$-modules other than $k$'', we recover (partly) the results from \cite{AIS} using filtration arguments (see Proposition \ref{Conca_gen}). On the other hand, the method of 
\cite{AIS} does give stronger statements 
and the reader is encouraged to consult that paper.

In the last part of Section \ref{sect_applications}, we give another application of the main theorems of Section \ref{sect_Koszul_modules}. We prove that any intersection of three linear ideals has linearity defect zero (Theorem \ref{thm_3ideals}). Note that Francisco and Van Tuyl \cite[Theorem 4.3]{FV} prove a similar statement but their method only works for monomial ideals and does not cover our situation. We hope to show in future work how the theory of linearity defect may yield interesting information on componentwise linear ideals, e.g. via recovering the result of Francisco and Van Tuyl.

\section{General bounds}
\label{sect_lind_exact_seq}
\subsection*{Notation and background}
Let $(S,\nn)$ be a standard graded algebra over a field $k$. Let $N$ be a finitely generated graded $S$-module. The {\it Castelnuovo-Mumford regularity} of $N$ over $S$ is 
$$
\reg_S N=\max\{j-i:\Tor^S_i(k,N)_j\neq 0\}.
$$
We say that $N$ has a {\it linear resolution} over $S$ if there exists some integer $d$ such that $\Tor^S_i(k,N)_j=0$ for all $i, j$ such that $j-i\neq d$. In that case, clearly $\reg_S N=d$, and we also say that $N$ has {\it $d$-linear resolution} over $S$.

We say that $S$ is a Koszul algebra, if $k=S/\nn$ has $0$-linear resolution over $S$. The standard graded polynomial ring $k[x_1,\ldots,x_n]$ (where $n\ge 1$) is a Koszul algebra: $k$ is resolved by the Koszul complex, which is a linear resolution.

Let $M$ be a finitely generated (graded) $R$-module, where $(R,\mm)$ is our local ring (or standard graded $k$-algebra). The associated graded module of $M$ with respect to the $\mm$-adic filtration is 
$$
\gr_{\mm}M=\bigoplus_{i=0}^{\infty}\frac{\mm^iM}{\mm^{i+1}M}.
$$ 
It is a graded module over the associated graded ring $\gr_{\mm}R$, with generators in degree $0$. Recall that Koszul modules are related to linear free resolutions by the following result; we refer the reader to \cite[Theorem 2.5]{MZ} and \cite[Proposition 1.5]{HIy}.

\begin{prop}
\label{linear_res}
Let $M\neq 0$ be a finitely generated $R$-module. The following are equivalent:
\begin{enumerate}[\quad\rm(i)]
\item $M$ is a Koszul $R$-module, i.e., $\lind_R M=0$;
\item The graded $\gr_{\mm}R$-module $\gr_{\mm}M$ has $0$-linear free resolution.
\end{enumerate}
\end{prop}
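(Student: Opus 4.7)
The plan is to exhibit the linear part $\linp^R F$ as a candidate minimal free resolution of $\gr_{\mm}M$ over $\gr_{\mm}R$ and then track ranks. Writing $F_i = R^{\beta_i}$ for $\beta_i = \dim_k \Tor_i^R(k,M)$, unwinding the construction of the associated graded of $\Gc^{\pnt}F$ shows that $\linp^R F$ is a complex of free $\gr_{\mm}R$-modules with $(\linp^R F)_i$ of rank $\beta_i$ generated in internal degree $i$; since $\partial(F_i) \subseteq \mm F_{i-1}$, its differentials are given by matrices of linear forms in $\gr_{\mm}R$. The augmentation $F_0 \twoheadrightarrow M$ induces a natural surjection $H_0(\linp^R F) \twoheadrightarrow \gr_{\mm}M$ of graded $\gr_{\mm}R$-modules.

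For (i) $\Rightarrow$ (ii): if $\lind_R M = 0$, then $\linp^R F$ has no positive-degree homology, so it is a minimal free resolution of $H_0(\linp^R F)$, linear by construction. A Hilbert-function comparison shows the Hilbert series of $H_0(\linp^R F)$, computed from $\linp^R F$ as an alternating sum of ranks, agrees with that of $\gr_{\mm}M$, computed independently from the values $\dim_k M/\mm^{n+1}M$ via a free resolution of $M$. This forces the natural surjection above to be an isomorphism, and hence $\gr_{\mm}M$ has a $0$-linear free resolution.

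For (ii) $\Rightarrow$ (i): given a $0$-linear free resolution $L$ of $\gr_{\mm}M$, one has $\rk L_i = \beta_i^{\gr_{\mm}R}(\gr_{\mm}M)$, and the standard inequality $\beta_i^R(M) \leq \beta_i^{\gr_{\mm}R}(\gr_{\mm}M)$ gives $\rk(\linp^R F)_i \leq \rk L_i$. Since both $\linp^R F$ and $L$ are linear complexes of free $\gr_{\mm}R$-modules augmenting to $\gr_{\mm}M$, a Hilbert-series comparison forces equality of ranks in every homological degree and acyclicity of $\linp^R F$ in positive degrees, i.e.\ $\lind_R M = 0$.

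The main obstacle is the identification $H_0(\linp^R F) = \gr_{\mm}M$: the natural surjection can be strict because $\partial(F_1) \cap \mm^n F_0$ need not equal $\partial(\mm^{n-1}F_1) + \mm^{n+1}F_0$, an Artin--Rees-type phenomenon. Ruling out this discrepancy — which must vanish under either hypothesis via the Hilbert-series bookkeeping described above — is the technical heart of the argument.
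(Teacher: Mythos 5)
The paper offers no proof of this proposition --- it is quoted from the literature (\cite[Theorem 2.5]{MZ}, \cite[Proposition 1.5]{HIy}) --- so your argument must stand on its own. Your setup is correct: $(\linp^R F)_i\cong(\gr_{\mm}R)(-i)^{\beta_i}$ with linear differentials, there is a natural surjection $H_0(\linp^R F)\twoheadrightarrow\gr_{\mm}M$, and the whole point is whether it is an isomorphism (it is genuinely strict in examples, e.g.\ $M=R/(x^2-y^3)$ over $R=k[x,y]_{(x,y)}$). But the Hilbert-series bookkeeping you rely on does not close either implication. For (i)$\Rightarrow$(ii), the ``independent computation'' of $\dim_k M/\mm^{n+1}M$ from a free resolution of $M$ is never supplied: the obvious complex $F\otimes_R R/\mm^{n+1}$ is unbounded and its Euler characteristic diverges when $\projdim_R M=\infty$, while the bounded complex that does compute this length, namely $F/\Gc^{n+1}F$, yields $\dim_k M/\mm^{n+1}M$ as an alternating sum of ranks only after one proves it is acyclic in positive degrees --- which must be transferred from the acyclicity of its associated graded $\bigoplus_{j\le n}(\linp^R F)_j$ through the finite filtration. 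That transfer is the actual content of the implication, so the step you defer to ``bookkeeping'' is the proof itself (it is fixable, but not by the computation you describe).

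The gap in (ii)$\Rightarrow$(i) is fatal as stated: a graded Euler-characteristic identity cannot detect acyclicity, because homology in consecutive homological degrees cancels in the alternating sum. Concretely, over $A=k[x,y]$ the minimal linear complex $A(-2)\xrightarrow{\,0\,}A(-1)^2\xrightarrow{(x\;\,y)}A$ has the same ranks, the same $H_0=k$, and the same graded Euler characteristic as the Koszul resolution of $k$, yet $H_1$ and $H_2$ are both nonzero. So ``equality of ranks forces acyclicity'' is false, and in addition your comparison presupposes that $\linp^R F$ augments onto $\gr_{\mm}M$ with $H_0(\linp^R F)=\gr_{\mm}M$, which at that stage is only known to be a surjection. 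The standard repair is structural rather than numerical: lift the linear minimal free resolution $L$ of $\gr_{\mm}M$ to a filtered free resolution $G$ of $M$ over $R$ whose associated graded complex is $L$; minimality of $L$ forces $G$ to be minimal, hence $G\cong F$ and $\linp^R F\cong L$ is acyclic in positive degrees. This lifting is the very device behind the inequality $\beta_i^R(M)\le\beta_i^{\gr_{\mm}R}(\gr_{\mm}M)$ that you invoke, but you need the complexes, not merely their ranks.
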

\begin{defn}
We say that $R$ is a Koszul ring if the residue field $k=R/\mm$ is a Koszul module.
\end{defn}
For example, any regular local ring is Koszul, since $\gr_{\mm}R$ is isomorphic to a standard graded polynomial ring over $k$.

For the convenience of our arguments, sometimes we work with the invariant 
$$\glind R=\sup\{\lind_R M:~ \text{$M$ is a finitely generated (graded) $R$-module}\},$$ 
which is called the {\it global linearity defect} of $R$.
\begin{lem}[Conca, Iyengar, Nguyen and R\"omer, {\cite[Corollary 6.4]{CINR}}]
\label{lem_hypersurfaces}
Let $f\neq 0$ be a quadratic form in the polynomial ring $k[x_1,\ldots,x_n]$ \textup{(}where $n\ge 1$\textup{)}. Then $\glind (k[x_1,\ldots,x_n]/(f))=n-1$.
\end{lem}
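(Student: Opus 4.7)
The plan is to establish the two inequalities $\glind R \le n-1$ and $\glind R \ge n-1$ separately.

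For the upper bound, I would first observe that $R = S/(f)$ is a Koszul algebra: the ambient polynomial ring $S = k[x_1,\ldots,x_n]$ is Koszul, and quotienting by a single quadric preserves the Koszul property (equivalently, $R$ is a quadratic complete intersection). Then, for a finitely generated graded $R$-module $M$, I would apply \c{S}ega's criterion recalled in the introduction: proving $\lind_R M \le n-1$ reduces to showing that $\Tor^R_i(\tau_s, M) = 0$ for every $i > n-1$ and every $s \ge 0$. The natural tool is the change-of-rings long exact sequence coming from the regular element $f \in S$, which expresses $\Tor^R_\bullet(N,M)$ in terms of $\Tor^S_\bullet(N,M)$ for any $R$-module $N$. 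Combined with the vanishing $\projdim_S (R/\mm^s) \le n$ and the Koszul property of $\gr_{\mm} R$ (via Proposition \ref{linear_res}), a careful bookkeeping forces the induced maps on Tor to vanish in homological degree strictly greater than $n-1$.

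For the lower bound, the task is to exhibit a finitely generated graded $R$-module $M$ realizing $\lind_R M = n-1$. Such a module will necessarily have a minimal free resolution whose early differentials carry entries of degree $\ge 2$, producing nontrivial homology of the linear part. Natural candidates are quotients $R/I$ where $I$ is generated in mixed degrees, for example an ideal combining a high power of a coordinate with a partial sequence of linear forms, chosen so that the syzygy computation propagates the nonlinearity through $n-1$ steps. After a linear change of variables bringing $f$ to a normal form (nondegenerate, or degenerate such as a product of two variables), one can explicitly compute enough terms of the minimal free resolution to verify that the linear part has nonzero homology precisely in position $n-1$.

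The main obstacle is the construction and verification of the module for the lower bound: following the minimal free resolution through $n-1$ syzygy steps requires careful accounting of how the mixed-degree generators of $I$ interact with the quadric relation, and one must ensure the construction is uniform across all nonzero quadratic forms (or at least handles degenerate and nondegenerate cases separately after a change of coordinates). The upper bound, by contrast, is comparatively routine once \c{S}ega's criterion and the change-of-rings long exact sequence are set up.
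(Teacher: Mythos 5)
The paper offers no proof of this lemma: it is imported verbatim from \cite[Corollary 6.4]{CINR}, so there is no internal argument to measure yours against, and your sketch has to stand on its own. It does not yet. For the upper bound, the change-of-rings long exact sequence for $R=S/(f)$ only yields a natural isomorphism $\Tor^R_i(N,M)\cong \Tor^R_{i-2}(N,M)$ once $i\ge n+2$ (where both $\Tor^S_i$ and $\Tor^S_{i-1}$ vanish); this reduces \c{S}ega's criterion to the two degrees $i=n$ and $i=n+1$, but it gives no reason for the maps $\Tor^R_i(\tau_s,M)$ to vanish there, and that is exactly where the content sits --- no bookkeeping with $\projdim_S(R/\mm^s)\le n$ will produce it (also, the appeal to ``the Koszul property of $\gr_{\mm}R$'' is vacuous here, since $R$ is already graded). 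The missing structural input is the standard one: $\depth R=n-1$, so $\Omega_{n-1}(M)$ is maximal Cohen--Macaulay (or free), and a non-free MCM module over a quadric hypersurface is Koszul because its matrix factorization $(A,B)$ with $AB=fI$ and $\deg f=2$ forces the relevant entries to be linear; then the remark after Corollary \ref{cor_freemod} gives $\lind_R M\le n-1$. Your proposed mechanism does not close without this.

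The lower bound is the more serious gap: you produce no module, only the expectation that an ideal ``combining a high power of a coordinate with a partial sequence of linear forms'' will work. Note what the upper bound already tells you: $\Omega_{n-1}(M)$ is always Koszul, and MCM syzygies are always Koszul, so an extremal $M$ must have $\Omega_{n-2}(M)$ non-Koszul, hence non-MCM, which forces $\depth M=0$ and requires the failure of linearity to persist through exactly $n-2$ syzygy steps before stopping. Exhibiting such an $M$ and verifying $\lind_R M=n-1$ (not merely $\ge 1$), uniformly in the quadric $f$ or after a normal-form reduction, is the entire substance of the inequality $\glind R\ge n-1$; the observation that the presentation matrix must have entries of degree $\ge 2$ is necessary but nowhere near sufficient. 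As written, neither inequality is established, and the honest course is either to supply the MCM/matrix-factorization argument and an explicit extremal module, or to cite \cite[Corollary 6.4]{CINR} as the paper does.
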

For more detailed discussions of the theory of free resolutions, we refer to Avramov's monograph 
\cite{Avr} and the book of Peeva \cite{P}.

\subsection*{Bounding the linearity defect}
The starting point for our investigation is the following result due to \c{S}ega. It was stated for the local case but taking advantage of the grading, the proof works equally well in the graded case.

\begin{thm}[\c{S}ega, {\cite[Theorem 2.2]{Se}}]
\label{Sega's_theorem}
For any non-trivial finitely generated $R$-module $M$, the following are equivalent:
\begin{enumerate}[\quad\rm(i)]
\item $\lind_R M \le t$;
\item For all $i>t$ and all $s\ge 0$, the natural morphism $\Tor^R_i(R/\mm^{s+1},M)\longrightarrow \Tor^R_i(R/\mm^s,M)$ induced by the canonical surjection $R/\mm^{s+1} \to R/\mm^s$ is zero.
\end{enumerate}
\end{thm}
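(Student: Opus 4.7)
Let $F$ denote the minimal free resolution of $M$. The plan is to translate both (i) and (ii) into cycle--boundary inclusions in $F$, and then to deduce each from the other by a short chase combined with an induction. Using $\Tor^R_i(R/\mm^s, M) = H_i(F/\mm^s F)$ and the fact that $\Tor^R_i(\tau_s, M)$ is induced by the surjection $F/\mm^{s+1}F \to F/\mm^s F$, one checks that $\Tor^R_i(\tau_s, M) = 0$ is equivalent to
$$
Z_i^{s+1} \subseteq \mm^s F_i + \partial F_{i+1}, \qquad Z_i^{s+1} := \{\, z \in F_i : \partial(z) \in \mm^{s+1} F_{i-1}\,\}.
$$
On the other hand, the $n$-th graded piece of $\linp^R F$ in homological degree $i$ equals $\mm^{n-i} F_i / \mm^{n-i+1} F_i$ with differential induced by $\partial$; setting $u := n - i \ge 0$, the vanishing $H_i(\linp^R F) = 0$ unravels to the condition that, for every $u \ge 0$,
$$
\{\, z \in \mm^u F_i : \partial(z) \in \mm^{u+2} F_{i-1}\,\} \subseteq \mm^{u+1} F_i + \partial(\mm^{u-1} F_{i+1}),
$$
where we use the convention $\mm^j = R$ for $j \le 0$.

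For (i) $\Rightarrow$ (ii), I would prove the following strengthening: if $\lind_R M \le t$ and $i > t$, then for every $u \ge 0$ and every $s \ge u$, any $z \in \mm^u F_i$ with $\partial(z) \in \mm^{s+1} F_{i-1}$ satisfies $z \in \mm^s F_i + \partial(\mm^{u-1} F_{i+1})$. This is an induction on $s - u$ with trivial base case $s = u$; in the inductive step, the reformulation of (i) lets one write $z = z' + \partial(w)$ with $z' \in \mm^{u+1} F_i$ and $w \in \mm^{u-1} F_{i+1}$, and then one applies the inductive hypothesis to $z'$ at index $u + 1$. Specializing to $u = 0$ recovers (ii).

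For (ii) $\Rightarrow$ (i), I would fix $i > t$ and $u \ge 0$ and take $z \in \mm^u F_i$ with $\partial(z) \in \mm^{u+2} F_{i-1}$. Applying (ii) in degree $i$ with $s = u + 1$ yields $z = z_1 + \partial(w_1)$ for some $z_1 \in \mm^{u+1} F_i$ and $w_1 \in F_{i+1}$; then $\partial(w_1) = z - z_1 \in \mm^u F_i$, so $w_1 \in Z_{i+1}^u$. The subtle step is that $w_1$ need not lie in $\mm^{u-1} F_{i+1}$ a priori; when $u \ge 1$, I would invoke (ii) one homological degree higher, at $i+1 > t$ with $s = u-1$, to write $w_1 = w_1' + \partial(v_1)$ with $w_1' \in \mm^{u-1} F_{i+1}$, and since $\partial(w_1) = \partial(w_1')$, replacing $w_1$ by $w_1'$ gives $z \in \mm^{u+1} F_i + \partial(\mm^{u-1} F_{i+1})$; the case $u = 0$ is immediate from the convention $\mm^{-1} = R$. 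The main obstacle, I expect, is recognizing that the implication (ii) $\Rightarrow$ (i) must invoke the $\Tor$-vanishing in the next homological degree $i+1$ in order to upgrade $w_1$ to $w_1'$; beyond that, the argument is just careful index bookkeeping in the minimal resolution.
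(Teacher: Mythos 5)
Your proposal is correct, but note that the paper does not prove this statement at all: it is quoted verbatim from \c{S}ega \cite[Theorem 2.2]{Se}, with only the remark that the argument transfers to the graded case. So there is no in-paper proof to compare against; what you have written is a self-contained replacement for the citation. I checked the key points and they hold up. The translation of (ii) into $Z_i^{s+1}\subseteq \mm^s F_i+\partial F_{i+1}$ is exactly the cycle/boundary description of $\Tor_i^R(R/\mm^s,M)=H_i(F/\mm^sF)$ together with the fact that $\Tor_i^R(\tau_s,M)$ is induced by the inclusion $Z_i^{s+1}\subseteq Z_i^s$. The translation of (i) is the homology of $\Gc^nF/\Gc^{n+1}F$ in weight $n=i+u$; one small point worth making explicit is that minimality of $F$ (i.e.\ $\partial F_{i+1}\subseteq\mm F_i$) is what guarantees $\partial(\mm^{u-1}F_{i+1})\subseteq\mm^uF_i$, so that the boundary term $\partial(\mm^{u-1}F_{i+1})$ genuinely lands in $\mm^uF_i/\mm^{u+1}F_i$, and also what makes your two descriptions agree at $u=0$ (there the associated graded complex has zero incoming differential, but $\mm F_i+\partial F_{i+1}=\mm F_i$ by minimality). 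Your induction on $s-u$ for (i)$\Rightarrow$(ii) is sound, and the step you flag as subtle in (ii)$\Rightarrow$(i) --- upgrading $w_1\in Z_{i+1}^{u}$ to $w_1'\in\mm^{u-1}F_{i+1}$ by invoking the $\Tor$-vanishing in homological degree $i+1$ with $s=u-1$, which is legitimate since $i+1>t$ --- is precisely the right move; replacing $w_1$ by $w_1'$ is harmless because $\partial w_1=\partial w_1'$. The argument is complete as an elementary proof of the equivalence.
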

The main result of this section is
\begin{prop}
\label{lind_exact_seq}
Let $0\longrightarrow M\xlongrightarrow{\phi} P \xlongrightarrow{\lambda} N \longrightarrow 0$ be a short exact sequence of non-trivial finitely generated $R$-modules. Define the \textup{(}possibly infinite\textup{)} numbers:
\begin{align*}
d_M &=\inf\{m\ge 0: \textnormal{the connecting map $\Tor^R_{i+1}(k,N) \longrightarrow \Tor^R_i(k,M)$ is zero for all $i\ge m$}\},\\
d_P &=\inf\{m\ge 0: \textnormal{the natural map $\Tor^R_i(k,M) \xlongrightarrow{\Tor^R_i(k,\phi)} \Tor^R_i(k,P)$ is zero for all $i\ge m$} \},\\
d_N &=\inf\{m\ge 0: \textnormal{the natural map $\Tor^R_i(k,P) \xlongrightarrow{\Tor^R_i(k,\lambda)} \Tor^R_i(k,N)$ is zero for all $i\ge m$}\}.
\end{align*}
Then there are inequalities
\begin{enumerate}[\quad\rm(i)]
\item $\lind_R N \le \max\{\min\{d_P,d_M+1\},\lind_R P, \lind_R M +1\},$ 
\item $\lind_R P \le \max\{\min\{d_M,d_N\},\lind_R M, \lind_R N\},$
\item $\lind_R M \le \max\{\min\{d_N-1,d_P\},\lind_R N-1,\lind_R P\}$.
\end{enumerate}
\end{prop}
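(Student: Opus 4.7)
The plan is to use \c{S}ega's criterion (Theorem~\ref{Sega's_theorem}) together with a diagram chase in the $\Tor$ grid arising from the two short exact sequences $0\to M\to P\to N\to 0$ and $0 \to \mm^s/\mm^{s+1} \to R/\mm^{s+1} \to R/\mm^s \to 0$ (one for each $s\ge 0$). Setting $A=\mm^s/\mm^{s+1}$, $B=R/\mm^{s+1}$, $C=R/\mm^s$, the second sequence gives long exact sequences of $\Tor$ in the first variable with vertical connecting maps $\partial^L\colon \Tor_i(C,L) \to \Tor_{i-1}(A,L)$ for $L\in\{M,P,N\}$, which fit into a commutative diagram together with the horizontal long exact sequences from $0\to M\to P\to N\to 0$ having connecting map $\delta$. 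Since $A$ is a $k$-vector space, $\Tor_i(A,L) \cong A\otimes_k \Tor_i(k,L)$, so the defining conditions of $d_P$, $d_M$, $d_N$ translate directly into vanishing of the horizontal maps $\phi_*^A$, $\delta^A$, $\lambda_*^A$ at the corresponding homological degrees. Meanwhile, Theorem~\ref{Sega's_theorem} interprets $i>\lind_R P$ and $i-1>\lind_R M$ as the vanishing, for every $s$, of the vertical maps $\Tor_i(B,P)\to \Tor_i(C,P)$ and $\Tor_{i-1}(B,M)\to \Tor_{i-1}(C,M)$, respectively.

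The strategy is to fix $i$ strictly above the claimed bound in each of (i)--(iii) and chase an element through the diagram to show that the relevant vertical map is zero. The $\min$ inside each bound signals a case split corresponding to the two alternatives. For (i), Case~1 assumes $i>d_P$, so $\phi_*^A=0$ at degree $i-1$: given $x\in \Tor_i(B,N)$, the vanishing of $\tau_M$ at $i-1$ yields $\tau_N(x)=\lambda_*^C(\bar y)$ for some $\bar y\in \Tor_i(C,P)$; next, $\lambda_*^A(\partial^P(\bar y))=\partial^N(\tau_N(x))=0$ combined with injectivity of $\lambda_*^A$ at $i-1$ (which follows from $\phi_*^A=0$) forces $\partial^P(\bar y)=0$; finally, $\tau_P(i)=0$ makes $\partial^P$ injective on $\Tor_i(C,P)$, so $\bar y=0$ and $\tau_N(x)=0$. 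Case~2 assumes $i>d_M+1$, giving only that $\phi_*^A$ is injective at $i-1$; a dual chase starting from $\delta^B(x)\in \Tor_{i-1}(B,M)$ and using the inclusion $\iota^M\colon \Tor_{i-1}(A,M)\to \Tor_{i-1}(B,M)$ to lift $\delta^B(x)$ to some $u\in\Tor_{i-1}(A,M)$ should yield the desired vanishing. The arguments for (ii) and (iii) follow the same template, with the element being chased, the direction of the chase, and the role of each $d_*$ adjusted accordingly.

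The main obstacle is Case~2 in each part: one must replace the convenient vanishing of a horizontal map by mere injectivity, so the elementary chase does not close immediately. One then needs to compare the two natural chases emanating from $x$---one via $\tau_N$, the other via $\delta^B$---using naturality of the connecting maps in the full $3\times 3$ system of long exact sequences; this is essentially an octahedral-type identity relating $\partial^P\circ\lambda_*^C$ and $\phi_*^A\circ\iota^{M,-1}\circ\delta^B$ modulo the indeterminacies $\Img\phi_*^C$ and $\Img\partial^M$. Pinning down the correcting terms as $\min\{d_*,d_{**}\pm 1\}$ rather than the cruder $\max\{d_*,d_{**}\pm 1\}$ is essential, and Example~\ref{ex_correcting_terms} confirms that these correcting terms are not superfluous.
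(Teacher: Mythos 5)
Your setup (the $\Tor$ grid from the two short exact sequences, the translation of $d_M,d_P,d_N$ and of $\lind$ via \c{S}ega's criterion, and the case split corresponding to the $\min$) is exactly the paper's, and your Case~1 chase for (i) is correct and essentially identical to the paper's proof of $\lind_R N\le\max\{d_P,\lind_R P,\lind_R M+1\}$. The problem is Case~2, which you do not actually prove: you end with ``should yield the desired vanishing'' and then concede that ``the elementary chase does not close immediately,'' deferring to an unspecified ``octahedral-type identity.'' That is a genuine gap, and the octahedral/naturality identity you invoke will not close it. Concretely: in Case~2 the chase produces a class $z\in\Tor_{i-1}(\mm^s/\mm^{s+1},M)$ with $\iota^P(\psi_{i-1}(z))=0$, where $\iota^P$ is induced by $\mm^s/\mm^{s+1}\hookrightarrow R/\mm^{s+1}$; to conclude you must know that $\Tor_{i-1}(R/\mm^{s+1},\phi)$ is injective, i.e.\ injectivity of $\phi_*$ with coefficients in $B=R/\mm^{s+1}$, whereas the hypothesis $i>d_M+1$ only gives injectivity of $\phi_*$ with coefficients in $k$ (equivalently in $A=\mm^s/\mm^{s+1}$). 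Injectivity of $\psi_{i-1}=\phi_*^A$ alone does not imply injectivity of $\phi_*^B$, and no naturality identity among the connecting maps supplies it.

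The missing ingredient is the paper's Lemma~\ref{rigidity}: if $\Tor^R_{\ell-1}(k,\phi)$ is injective for a single $\ell\ge\lind_R M+1$, then $\Tor^R_i(R/\mm^s,\phi)$ is injective for all $i\ge\ell$ and all $s\ge 0$ (with a dual statement for vanishing, needed for the other parts). This is proved by a separate induction on $i$ using the injectivity of the connecting maps $\Tor_i(R/\mm^s,M)\to\Tor_{i-1}(\mm^s/\mm^{s+1},M)$ guaranteed by \c{S}ega's theorem above $\lind_R M$ --- which is exactly where the correcting term $\lind_R M+1$ in the bound earns its keep. With that lemma, $i\ge d_M+2$ and $i-1\ge\lind_R M+1$ give injectivity of $\kappa=\Tor_{i-1}(R/\mm^{s+1},\phi)$, and the chase closes. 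You should isolate and prove this rigidity statement (and its ``zero map'' counterpart, needed e.g.\ for the $d_P$ and $d_N$ alternatives in (ii) and (iii)) before attempting Case~2; as written, one of the two sub-bounds in each of (i)--(iii) is unproven.
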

Several comments are in order.
\begin{rem}
\label{rem_d_numbers}

(i) In general, we have the following inequalities:
\begin{align*}
d_M &\le \min \{\projdim_R M+1, \projdim_R N\},\\
 d_P &\le \min \{\projdim_R M +1,\projdim_R P+1\},\\
d_N &\le \min \{\projdim_R P+1, \projdim_R N+1\}.
\end{align*}
Hence if $P$ is a free module, then $d_P, d_N\le 1$. Similar things happen if $M$ or $N$ is a free module. 
\medskip

(ii) Since $\Tor^R_i(k,M) \longrightarrow \Tor^R_i(k,P) \longrightarrow \Tor^R_i(k,N) \longrightarrow \Tor^R_{i-1}(k,M)$ is an exact sequence for all $i$, we also have other interpretations for the numbers $d_M, d_N, d_P$. For example, 
\[
d_M=\inf \{m\ge 0: \textnormal{the map $\Tor^R_i(k,M) \xlongrightarrow{\Tor^R_i(k,\phi)} \Tor^R_i(k,P)$ is injective for all $i\ge m$}\}.
\]
Therefore, the two numbers $d_P$ and $d_M$ are not simultaneously finite unless $\projdim_R M<\infty$. Similar statements hold for the pairs $d_P$ and $d_N$, $d_M$ and $d_N$.

\medskip

(iii) The above interpretation of $d_M$ indicates that the first inequality of \eqref{lind_exact_seq} relates $\lind_R N$ with asymptotic properties of the map $\Tor^R_i(k,M)\xlongrightarrow{\Tor^R_i(k,\phi)} \Tor^R_i(k,P)$. Similar comments apply to the inequalities for linearity defects of $M$ and $P$.
\end{rem}
\begin{ex}
In general, none of the numbers $d_M,d_N,d_P$ is finite, even if $R$ is Koszul and $M,N,P$ are Koszul modules. For example, take $R=k[x,y]/(xy)$. Consider the exact sequence with natural maps
$$
0\longrightarrow (x^3,y^2)\xlongrightarrow{\phi} (x^2,y^2)\xlongrightarrow{\lambda} \frac{(x^2,y^2)}{(x^3,y^2)}\longrightarrow 0.
$$
The (2-periodic) minimal free resolution of $k$ over $R$ is given by
$$
F: \cdots \longrightarrow R^2\xrightarrow{\left(\begin{matrix}
y & 0 \\
0 & x 
\end{matrix}\right)}R^2 \xrightarrow{\left(\begin{matrix}
x & 0 \\
0 & y 
\end{matrix}\right)} R^2 
\xrightarrow{\left(\begin{matrix}
y & 0 \\
0 & x
\end{matrix}\right)}R^2 \xrightarrow{\left(\begin{matrix}
x & y
\end{matrix}\right)} R \longrightarrow 0.
$$
Let $P=(x^2,y^2)$, we want to compute $\Tor^R_i(k,P)$. Note that $P\otimes_R F_i=P\oplus P$ for $i\ge 1$. Fix $i\ge 2$, the map $P\otimes_R F_{2i}\longrightarrow P\otimes_R F_{2i-1}$ is given by 
$$
(a,b)\mapsto (ya,xb)
$$ and the map $P\otimes_R F_{2i+1}\longrightarrow P\otimes_R F_{2i}$ is given by 
$$
(u,v)\mapsto (xu,yv).
$$
Let $\partial$ be the differential of $P\otimes_R F$, then 
\begin{align*}
\Ker \partial_{2i}&=(x^2)\oplus (y^2),\\
\Img \partial_{2i+1}&=(x^3)\oplus (y^3).
\end{align*}
Therefore $\Tor^R_{2i}(k,P)\cong ((x^2)/(x^3))\oplus ((y^2)/(y^3))$. Similarly, setting $M=(x^3,y^2)$, then it holds that $\Tor^R_{2i}(k,M)\cong ((x^3)/(x^4))\oplus ((y^2)/(y^3))$. In particular, 
\begin{align*}
\Ker \left(\Tor^R_{2i}(k,\phi) \right)&=((x^3)/(x^4))\\
\Img \left(\Tor^R_{2i}(k,\phi) \right)&=((y^2)/(y^3)).
\end{align*}
This implies that $d_P=\infty$. Denote $N=P/M$, then from the exact sequence of $\Tor$, we also infer that $d_M=d_N=\infty$. Note that $N\cong R/(x,y)=k$, so $\lind_R N=0$. One can check that $M, P$ are Koszul modules: By Lemma \ref{lem_hypersurfaces}, $\lind_R R/U\le 1$ for any ideal $U\subseteq \mm$. Hence $\lind_R U=0$.
\end{ex}

\medskip

Now we are going to prove Proposition \ref{lind_exact_seq}. First we have several simple but very useful observations. 
\begin{lem}
\label{rigidity}
Let $M \xlongrightarrow{\phi} P$ be an $R$-linear map between finitely generated $R$-modules.
\begin{enumerate}[\quad\rm(i)]
\item If for some $\ell \ge \lind_R M+1$, the map $\Tor^R_{\ell -1}(k,M)\xlongrightarrow{\Tor^R_{\ell -1}(k,\phi)} \Tor^R_{\ell-1}(k,P)$ is injective, then the map
\[
\Tor^R_i(R/\mm^s, M)\xlongrightarrow{\Tor^R_i(R/\mm^s,\phi)} \Tor^R_i(R/\mm^s, P)
\] 
is injective for all $i\ge \ell$ and all $s\ge 0$.
\item If for some $\ell \ge \lind_R P+1$, the map $\Tor^R_{\ell -1}(k,M)\xlongrightarrow{\Tor^R_{\ell -1}(k,\phi)} \Tor^R_{\ell-1}(k,P)$ is zero, then the map 
\[
\Tor^R_i(R/\mm^s, M)\xlongrightarrow{\Tor^R_i(R/\mm^s,\phi)} \Tor^R_i(R/\mm^s, P)
\] 
is zero for all $i\ge \ell$ and all $s\ge 0$.
\end{enumerate}
\end{lem}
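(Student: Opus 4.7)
The strategy is to convert the hypothesis on $\lind_R M$ (respectively $\lind_R P$) into injectivity of certain connecting homomorphisms via \c{S}ega's Theorem \ref{Sega's_theorem}, and then propagate the single-degree assumption on $\Tor^R_{\ell-1}(k,\phi)$ first to all homological degrees over $k$ and then to all $R/\mm^s$ by a two-step diagram chase. For each $s\ge 1$ I plan to tensor the short exact sequence
\[
0\longrightarrow \mm^s/\mm^{s+1}\longrightarrow R/\mm^{s+1}\longrightarrow R/\mm^s\longrightarrow 0
\]
with $M$ and with $P$, and paste the two resulting Tor long exact sequences together via $\phi$ into a commutative square. Since $\mm^s/\mm^{s+1}$ is a $k$-vector space, the target of the connecting map is naturally $\mm^s/\mm^{s+1}\otimes_k \Tor^R_{i-1}(k,-)$, so the right-hand vertical of this square is $\id\otimes\Tor^R_{i-1}(k,\phi)$.

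The crucial input is the following consequence of Theorem \ref{Sega's_theorem}: under $\lind_R M\le\ell-1$, the map $\Tor^R_i(R/\mm^{s+1},M)\to\Tor^R_i(R/\mm^s,M)$ vanishes for every $i\ge\ell$ and $s\ge 0$, so by exactness the connecting map $\partial\colon\Tor^R_i(R/\mm^s,M)\to\Tor^R_{i-1}(\mm^s/\mm^{s+1},M)$ is injective in that range. The analogous statement for $P$ is what will drive part (ii).

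With this prepared, I would next prove by induction on $i$ that $\Tor^R_i(k,\phi)$ is injective (in case (i)) or zero (in case (ii)) for all $i\ge\ell-1$. The base case $i=\ell-1$ is exactly the hypothesis of the lemma. For the inductive step I specialise the commutative square to $s=1$: in (i) the top $\partial$ is injective by \c{S}ega applied to $M$ and the right vertical is injective by the inductive hypothesis, so commutativity forces $\Tor^R_i(k,\phi)$ to be injective; in (ii) the bottom $\partial$ is injective by \c{S}ega applied to $P$ and the right vertical is zero by the inductive hypothesis, forcing $\Tor^R_i(k,\phi)$ to be zero.

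Finally, with the $k$-coefficient statement in hand, I would replay exactly the same diagram chase for arbitrary $s\ge 1$ and any $i\ge\ell$: in (i) the top $\partial$ is injective and the right vertical $\id\otimes\Tor^R_{i-1}(k,\phi)$ is injective by the previous step, which forces $\Tor^R_i(R/\mm^s,\phi)$ to be injective; in (ii) the bottom $\partial$ is injective and the right vertical is zero, which forces $\Tor^R_i(R/\mm^s,\phi)$ to be zero. The case $s=0$ is vacuous since $R/\mm^0=0$ by the paper's convention. The step I expect to require the most care is the bookkeeping of homological degrees: \c{S}ega's vanishing is read off at degree $i$, while the right vertical of the square involves $\Tor^R_{i-1}(k,\phi)$, so the induction must be arranged so that neither unit in the inequality $\ell\ge \lind_R M+1$ (respectively $\ell\ge\lind_R P+1$) is lost when the two sides are combined.
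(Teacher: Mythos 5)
Your proposal is correct and follows essentially the same route as the paper: Şega's theorem converts the bound on $\lind_R M$ (resp.\ $\lind_R P$) into injectivity of the connecting maps $\Tor^R_i(R/\mm^s,-)\to\Tor^R_{i-1}(\mm^s/\mm^{s+1},-)$, an induction on $i$ with $s=1$ propagates the hypothesis from degree $\ell-1$ to all $i\ge\ell-1$ over $k$, and the same commutative square with $\mm^s/\mm^{s+1}$ a direct sum of copies of $k$ then yields the statement for arbitrary $s$. The degree bookkeeping you flag is handled exactly as you describe, so there is nothing to add.
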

\begin{proof}
Consider the following commutative diagram, where $\rho, \psi_{i-1}$ are induced by $\phi$, and $\alpha^i_M, \alpha^i_P$ are connecting maps:
\begin{displaymath}
\xymatrix{\Tor^R_i(R/\mm^s,M) \ar[d]^{\rho} \ar[r]^{\alpha^i_M} & \Tor^R_{i-1}(\mm^s/\mm^{s+1},M)  \ar[d]^{\psi_{i-1}}\\
\Tor^R_i(R/\mm^s,P) \ar[r]^{\alpha^i_P}      & \Tor^R_{i-1}(\mm^s/\mm^{s+1},P).
}
\end{displaymath}

(i) By induction on $i$ and using the above diagram for $s=1$, we see that $\Tor^R_i(k,M)\xlongrightarrow{\Tor^R_i(k,\phi)} \Tor^R_i(k,P)$ is injective for all $i\ge \ell-1$. Note that as $i\ge \lind_R M+1$, by Theorem \ref{Sega's_theorem}, the map $\alpha^i_M$ is injective. Next let $s\ge 0$ be arbitrary, again using the diagram and the fact that $\mm^s/\mm^{s+1}$ is either $0$ (equivalently, $\mm^s=0$) or isomorphic to a direct sum of copies of $k$, we deduce that $\rho=\Tor^R_i(R/\mm^s,\phi)$ is also injective.

\medskip

(ii) Similarly, by induction on $i$ and using the diagram for $s=1$, $\Tor^R_i(k,M)\xlongrightarrow{\Tor^R_i(k,\phi)} \Tor^R_i(k,P)$ is the zero map for all $i\ge \ell-1$. Note that since $i\ge \lind_R P+1$, $\alpha^i_P$ is injective. Then for arbitrary $s\ge 0$, using the diagram, we see that $\rho=\Tor^R_i(R/\mm^s,\phi)$ is the zero map as well.
\end{proof}
\begin{proof}[Proof of Proposition \ref{lind_exact_seq}]
Below, we omit the superscript $R$ in the notation of Tor modules for simplicity.

(i) For the proof of the inequality $\lind_R N\le \max\{d_P,\lind_R P,\lind_R M+1\}$, we may assume that $\ell=\max\{d_P,\lind_R P, \lind_R M+1\}<\infty$. For each $i>\ell, s\ge 0$, from the exact sequence
\[
0\longrightarrow \mm^s/\mm^{s+1} \longrightarrow R/\mm^{s+1} \longrightarrow R/\mm^s \longrightarrow 0,
\]
we get the following commutative diagram with exact rows and columns
\begin{displaymath}
\xymatrix{
   & \Tor_i(R/\mm^s,M) \ar[d]  \ar[r]^{\alpha^i_M} & \Tor_{i-1}(\mm^s/\mm^{s+1},M) \ar[d]^{\psi_{i-1}} \ar[r]& \Tor_{i-1}(R/\mm^{s+1},M) \ar[d]^{\kappa}\\
0 \ar[r] & \Tor_i(R/\mm^s,P) \ar[d]^{\pi} \ar[r]^{\alpha^i_P} & \Tor_{i-1}(\mm^s/\mm^{s+1},P)\ar[d]^{\pi}\ar[r] & \Tor_{i-1}(R/\mm^{s+1},P)\\
         & \Tor_i(R/\mm^s,N) \ar[d]^{\gamma} \ar[r]^{\alpha^i_N} & \Tor_{i-1}(\mm^s/\mm^{s+1},N)\ar[d]^{\gamma}&\\ 
0 \ar[r] & \Tor_{i-1}(R/\mm^s,M)  \ar[r]^{\alpha^{i-1}_M}        & \Tor_{i-2}(\mm^s/\mm^{s+1},M)               &}          
\end{displaymath}
By \c{S}ega's Theorem \ref{Sega's_theorem} and the fact that $i\ge \max\{\lind_R P+1, \lind_R M+2\}$, we have $\alpha^i_P,\alpha^{i-1}_M$ are injective. Note that $\mm^s/\mm^{s+1}$ is either zero if $\mm^s=0$ or otherwise a direct sum of copies of $k$, therefore by hypothesis, we have $\psi_{i-1}=0$. Now we need to show that $\alpha^i_N$ is also injective. This is a simple diagram chasing. Hence $\lind_R N\le \ell$.

Next we want to show that $\lind_R N\le \max\{d_M+1,\lind_R P,\lind_R M+1\}$. We lose nothing by assuming that the right-hand side is finite. Take $i\ge \max\{d_M+1,\lind_R P,\lind_R M+1\}+1$. Look at the exact sequence $\Tor_{i-1}(k,N)\longrightarrow \Tor_{i-2}(k,M) \xlongrightarrow{\Tor_{i-2}(k,\phi)} \Tor_{i-2}(k,P)$. Since $i-2\ge d_M$, the first map is zero. Hence the second map is injective. Now $i-1\ge \lind_R M+1$, hence by Lemma \ref{rigidity}(i), $\kappa$ is injective. Therefore by diagram chasing, again $\lind_R N < i$.

\medskip

(ii), (iii): The proofs are similar to part (i).
\end{proof}

We give various instances to show that none of the inequalities of Proposition \ref{lind_exact_seq} is true without the correcting terms $d_M, d_N$ and $d_P$. In fact, we will exhibit examples of exact sequences $0\to M \to P \to N \to 0$ where one of the modules has infinite linearity defect and the other two have small linearity defect.

\begin{ex}
\label{ex_correcting_terms} 
Let $R=k[x,y,z,t]/((x,y)^2+(z,t)^2)$, $\mm$ its graded maximal ideal. Observe that $\mm^3=0$. By result of Roos \cite[Theorem 2.4]{Roos}, there exists a graded $R$-module with infinite linearity defect. Explicitly, by \cite[Formula (5.2)]{Roos} and \cite[Proposition 1.8]{HIy}, the cokernel of the map $R(-1)^3\longrightarrow R^2$ given by the matrix 
\[
\left(\begin{matrix}
y & x+3t & t \\
z & -t & x+t
\end{matrix}\right)
\]
is such a module. Let $F=R(-1)^3, G=R^2,M=\Ker(F\to G)$ and $N=\Img(F\to G)$. Note that $F$ is the projective cover of $N$. Since $N\subseteq \mm G$, we have $\mm^2N=0$ (recall that $\mm^3=0$). Clearly $\lind_R N=\lind_R M=\infty$.

\medskip

(i) The $R$-module $N$ is an extension of Koszul $R$-modules. Indeed, we have an exact sequence
\[
0\longrightarrow \mm N \longrightarrow N \longrightarrow N/\mm N \longrightarrow 0.
\]
Now $\mm N$ and $N/\mm N$ are both annihilated by $\mm$, so they are Koszul modules.  So there is an extension of Koszul $R$-modules which has infinite linearity defect.

\medskip

(ii) Since $M\subseteq \mm F$, we also have an exact sequence
\[
0\longrightarrow M \longrightarrow \mm F \longrightarrow \mm N \longrightarrow 0.
\]
Now $\mm F$ is a Koszul module and $\mm N$ is also Koszul as noted above. So the kernel of a surjection of Koszul modules may have infinite linearity defect.

\medskip

(iii) Now $N$ is an $(R/\mm^2)$-module so we can take the beginning of the minimal graded $(R/\mm^2)$-free resolution of $N$, say (without grading notation)
\[
0\longrightarrow D \longrightarrow (R/\mm^2)^r \longrightarrow N \longrightarrow 0.
\]
So $D$ is annihilated by $\mm$, hence $D$ is a Koszul $R$-module. Also $\lind_R (R/\mm^2)=1$ but $\lind_R N=\infty$.

We do not know if there exists a short exact sequence in which the first two modules are Koszul but the cokernel has infinite linearity defect.
\end{ex}
We record a few consequences of Proposition \ref{lind_exact_seq}. Interestingly, 
we can extract information about the linearity defect from any (minimal or not) free resolution of a module: If $P_{\pnt}$ is a free resolution of $N$, then 
$\lind_R N=r\ge 1$ if and only if $r$ is the minimal number $i$ such that $\Omega_i(N)=\Img(P_i\longrightarrow P_{i-1})$ 
is Koszul. If $N$ is a Koszul 
module then so is $\Omega_i(N)$ for every $i\ge 1$.

\begin{cor}
\label{cor_freemod}
Let $0\longrightarrow M \xlongrightarrow{\phi} P \xlongrightarrow{\lambda} N \longrightarrow 0$ be an exact sequence of non-trivial finitely generated $R$-modules. Then 
\begin{enumerate}[\quad\rm(i)]
\item $\lind_R N \le \min \{\max\{\projdim_R P+1,\lind_R M+1\}, \max\{\lind_R P, \projdim_R M+1\} \}$,
\item $\lind_R P \le \min \{\max\{\projdim_R M+1,\lind_R N\}, \max\{\lind_R M, \projdim_R N\}\}$,
\item $\lind_R M \le \min \{\max\{\lind_R N-1, \projdim_R P\},\max\{\projdim_R N, \lind_R P\} \}$.
\end{enumerate}
In particular, we have:
\begin{enumerate}[\quad\rm(a)]
\item If $P$ is free, then $\lind_R M=\lind_R N-1$ if $\lind_R N\ge 1$ and $\lind_R M=0$ otherwise.
\item If one of the modules has finite projective dimension, then the other two have both finite or both infinite linearity defects.
\end{enumerate}
\end{cor}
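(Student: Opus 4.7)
The plan is to deduce Corollary \ref{cor_freemod} from Proposition \ref{lind_exact_seq} by feeding in the a priori upper bounds on $d_M, d_N, d_P$ recorded in Remark \ref{rem_d_numbers}(i), namely
\[
d_M \le \min\{\projdim_R M + 1,\ \projdim_R N\},\ \ d_P \le \min\{\projdim_R M+1,\ \projdim_R P+1\},\ \ d_N \le \min\{\projdim_R P+1,\ \projdim_R N+1\}.
\]
Each of (i), (ii), (iii) in the corollary is a minimum of two upper bounds, and for each of those two bounds I would choose a different one of the two available estimates on the relevant $d$-number, so that after taking the $\max$ with $\lind_R(-)$ terms (and using $\lind \le \projdim$ to absorb the redundant terms) the inequality falls out. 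For example, for (i), to obtain $\max\{\projdim_R P+1,\lind_R M+1\}$ I would use $d_P \le \projdim_R P+1$ in $\min\{d_P,d_M+1\}$, while for the other bound $\max\{\lind_R P,\projdim_R M+1\}$ I would instead use $d_P \le \projdim_R M+1$. The reasoning for (ii) and (iii) is analogous: in (iii), to get the bound $\max\{\lind_R N - 1,\projdim_R P\}$ one uses $d_N - 1 \le \projdim_R P$, and for $\max\{\projdim_R N,\lind_R P\}$ one uses $d_N - 1 \le \projdim_R N$.

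For the consequence (a), assume $P$ is free, so $\projdim_R P = \lind_R P = 0$. Part (iii) of the corollary then gives $\lind_R M \le \max\{\lind_R N - 1, 0\}$. For the reverse direction, part (i) gives $\lind_R N \le \max\{1, \lind_R M + 1\} = \lind_R M + 1$ (using $\lind_R M \ge 0$), i.e.\ $\lind_R N - 1 \le \lind_R M$. Combining, if $\lind_R N \ge 1$ then $\lind_R M = \lind_R N - 1$, while if $\lind_R N = 0$ the first inequality already forces $\lind_R M = 0$.

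For (b), I proceed by a short case analysis depending on which of $M$, $N$, $P$ has finite projective dimension, and invoke whichever of (i)--(iii) bounds the linearity defect of one of the remaining two modules by a $\max$ involving only the linearity defect of the other and a finite projective dimension. For instance, if $\projdim_R M < \infty$, then (i) and (ii) give $\lind_R N \le \max\{\lind_R P,\projdim_R M + 1\}$ and $\lind_R P \le \max\{\lind_R N,\projdim_R M + 1\}$, which immediately yields the equivalence $\lind_R N < \infty \Longleftrightarrow \lind_R P < \infty$. The cases $\projdim_R N < \infty$ and $\projdim_R P < \infty$ are handled symmetrically by pairing (ii)--(iii) and (i)--(iii) respectively.

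The only slightly delicate point — and the one I would double-check carefully — is the matching between the two summands inside each $\min$ in Proposition \ref{lind_exact_seq} and the two upper bounds appearing in the outer $\min$ in the corollary; the $+1$ shifts in the bounds for $d_M,d_N,d_P$ and the $-1$ in the statement for $\lind_R M$ must be tracked so that the various $\projdim + 1$ and $\lind + 1$ terms line up as asserted. Once that bookkeeping is set, the argument is entirely formal.
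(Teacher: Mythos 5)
Your proposal is correct and follows essentially the same route as the paper: deduce each inequality from Proposition \ref{lind_exact_seq} by substituting the two estimates on the relevant $d$-number from Remark \ref{rem_d_numbers}(i) and absorbing redundant terms via $\lind_R(-)\le\projdim_R(-)$, then read off (a) and (b) from the resulting bounds. The bookkeeping you flag does line up exactly as you describe, so there is nothing to add.
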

\begin{proof}
For (i): using Proposition \ref{lind_exact_seq}, we get
$$
\lind_R N\le \max\{d_P,\lind_R P,\lind_R M+1\}.
$$
Since $d_P\le \max\{\projdim_R P+1,\projdim_R M+1\}$ by Remark \ref{rem_d_numbers}(i), and $\lind_R M\le \projdim_R M$, the desired inequalities follow. Similar arguments work for (ii) and (iii).

For (a): since $\projdim_R P=0$, from (i) and (iii), we get the inequalities
\begin{align*}
\lind_R N &\le \lind_R M+1,\\
\lind_R M &\le \max\{\lind_R N-1,0\}.
\end{align*}
This yields the conclusion of (a). The remaining assertion is a consequence of (i)--(iii).
\end{proof}
\section{Short exact sequences involving Koszul modules}
\label{sect_Koszul_modules}
We describe quite concretely the behavior of linearity defect for some short exact sequences involving Koszul modules without any assumption on the ground ring. Firstly, using results in Section \ref{sect_lind_exact_seq}, we can control the linearity defect for certain ``pure" extensions of a Koszul module. The first main result of this section is as follows.

\begin{thm}
\label{pure_extension}
Let $0\longrightarrow M'\xlongrightarrow{\phi'} P' \xlongrightarrow{\lambda'} N' \longrightarrow 0$ be a short exact sequence of non-zero finitely generated $R$-modules where
\begin{enumerate}[\quad\rm(i)]
\item $M'$ is a Koszul module;
\item $M'\cap \mm P'=\mm M'$.
\end{enumerate}
Then there are inequalities $\lind_R P'\le \lind_R N' \le \max\{\lind_R P',1\}$. In particular, $\lind_R N'=\lind_R P'$ if $\lind_R P'\ge 1$ and $\lind_R N'\le 1$ if $\lind_R P'=0$.

Moreover \textup{(}see Green and Mart\'inez-Villa \cite[Propositions 5.2 and 5.3]{GM}\textup{)}, $\lind_R N'=0$ if and only if $P'$ is a Koszul module and $M'\cap \mm^s P'=\mm^s M'$ for all $s\ge 1$ .
\end{thm}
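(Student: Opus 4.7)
The strategy for the first chain of inequalities is to extract optimal vanishing from the hypotheses via Lemma~\ref{rigidity}(i) and feed it into Proposition~\ref{lind_exact_seq}. Hypothesis (ii) is equivalent to the injectivity of $\Tor^R_0(k,\phi'): M'/\mm M' \to P'/\mm P'$; since $M'$ is Koszul, Lemma~\ref{rigidity}(i) applied with $\ell = 1$ then promotes this to the injectivity of $\Tor^R_i(k,\phi')$ for every $i \ge 1$. By Remark~\ref{rem_d_numbers}(ii) this reads $d_{M'} = 0$. Substituting $d_{M'} = 0$ and $\lind_R M' = 0$ into Proposition~\ref{lind_exact_seq}(i) and (ii) yields
\[
\lind_R N' \le \max\{\min\{d_{P'}, 1\}, \lind_R P', 1\} \le \max\{\lind_R P', 1\}, \qquad \lind_R P' \le \lind_R N',
\]
and the two ``in particular'' assertions follow by case analysis on $\lind_R P'$.

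For the direction $(\Rightarrow)$ of the ``moreover'' part, assume $\lind_R N' = 0$; the chain above forces $\lind_R P' = 0$, so $P'$ is Koszul. The equality $M' \cap \mm^s P' = \mm^s M'$ is equivalent, via the long exact sequence of $\Tor^R(-, R/\mm^s)$ applied to $0 \to M' \to P' \to N' \to 0$, to the vanishing of the connecting map $\delta_s : \Tor^R_1(R/\mm^s, N') \to M'/\mm^s M'$. To force $\delta_s = 0$, I invoke naturality of the connecting homomorphism applied to $0 \to \mm^{s-1}/\mm^s \to R/\mm^s \to R/\mm^{s-1} \to 0$, producing the commutative square
\begin{displaymath}
\xymatrix{
\Tor^R_1(\mm^{s-1}/\mm^s, N') \ar[r]^-{\sigma} \ar[d]_{\tilde\delta} & \Tor^R_1(R/\mm^s, N') \ar[d]^{\delta_s}\\
\Tor^R_0(\mm^{s-1}/\mm^s, M') \ar[r] & M'/\mm^s M'.
}
\end{displaymath}
The top row $\sigma$ is surjective because \c{S}ega's Theorem~\ref{Sega's_theorem}, combined with $\lind_R N' = 0$, gives $\Tor^R_1(\tau_{s-1}, N') = 0$. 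The left vertical $\tilde\delta$ vanishes because $\Tor^R_0(\mm^{s-1}/\mm^s, M') \to \Tor^R_0(\mm^{s-1}/\mm^s, P')$ identifies with $(\mm^{s-1}/\mm^s) \otimes_k (M'/\mm M' \to P'/\mm P')$, which is injective by hypothesis (ii) together with flatness over $k$. Commutativity and surjectivity of $\sigma$ then yield $\delta_s = 0$. This is the hard step: translating the Tor-theoretic Koszul condition on $N'$ into the intersection statement requires precisely this interplay between \c{S}ega's criterion and the naturality square.

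The direction $(\Leftarrow)$ is cleaner. The assumption $M' \cap \mm^s P' = \mm^s M'$ for all $s$ says the $\mm$-adic filtration on $P'$ induces that on $M'$, hence $0 \to \gr_\mm M' \to \gr_\mm P' \to \gr_\mm N' \to 0$ is exact over $\gr_\mm R$. By Proposition~\ref{linear_res}, both $\gr_\mm M'$ and $\gr_\mm P'$ have $0$-linear free resolution. The long exact sequence of $\Tor^{\gr_\mm R}(k,-)$ then forces $\Tor^{\gr_\mm R}_i(k, \gr_\mm N')_j = 0$ for all $j > i$ (both flanking terms in the sequence $\Tor^{\gr_\mm R}_i(k,\gr_\mm P')_j \to \Tor^{\gr_\mm R}_i(k,\gr_\mm N')_j \to \Tor^{\gr_\mm R}_{i-1}(k,\gr_\mm M')_j$ vanish), while the vanishing for $j < i$ is automatic since $\gr_\mm N'$ is generated in degree $0$ as a quotient of $\gr_\mm P'$. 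Hence $\gr_\mm N'$ has $0$-linear resolution, and Proposition~\ref{linear_res} gives $\lind_R N' = 0$.
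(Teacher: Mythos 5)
Your proof is correct. The first chain of inequalities is obtained exactly as in the paper: hypothesis (ii) gives injectivity of $\Tor^R_0(k,\phi')$, Lemma~\ref{rigidity}(i) with $\ell=1$ upgrades this to $d_{M'}=0$, and Proposition~\ref{lind_exact_seq}(i)--(ii) with $\lind_R M'=0$ yields $\lind_R P'\le\lind_R N'\le\max\{\lind_R P',1\}$. Where you diverge is in the ``moreover'' equivalence. The paper handles both directions with a single snake-lemma diagram built from $0\to\mm^s/\mm^{s+1}\to R/\mm^{s+1}\to R/\mm^s\to 0$ crossed with the module sequence: for $(\Leftarrow)$ it shows the connecting map $\alpha_{N'}$ is injective directly, and for $(\Rightarrow)$ it argues by contradiction with a minimal $s$ at which $M'/\mm^{s+1}M'\to P'/\mm^{s+1}P'$ fails to be injective. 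You instead prove $(\Leftarrow)$ by passing to associated graded modules -- checking that $M'\cap\mm^sP'=\mm^sM'$ makes $0\to\gr_\mm M'\to\gr_\mm P'\to\gr_\mm N'\to 0$ exact and then doing the $\Tor$-degree bookkeeping over $\gr_\mm R$ via Proposition~\ref{linear_res} -- which is precisely the technique the paper reserves for the analogous part of Theorem~\ref{small_inclusion}; and you prove $(\Rightarrow)$ directly (no contradiction) by exhibiting the naturality square for the connecting homomorphism with respect to the coefficient map $\mm^{s-1}/\mm^s\to R/\mm^s$, using \c{S}ega's criterion to make $\sigma$ surjective and hypothesis (ii) to kill $\tilde\delta$. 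All the individual steps check out (in particular the identification $\Img\delta_s=(M'\cap\mm^sP')/\mm^sM'$ and the flatness argument for $\tilde\delta=0$). The two routes are cousins -- they chase different cells of the same lattice of long exact sequences -- but yours is a legitimate and arguably more transparent organization of the $(\Rightarrow)$ direction, at the cost of invoking the graded machinery of Proposition~\ref{linear_res} for $(\Leftarrow)$ where the paper stays entirely inside \c{S}ega's $\Tor$ criterion.
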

\begin{proof}
We will show that $d_{M'}=0$, or equivalently, $\Tor^R_i(k,M')\xlongrightarrow{\Tor^R_i(k,\phi')} \Tor^R_i(k,P')$ is injective for each $i\ge 0$.

This is clear for $i=0$ thanks to the equality $M'\cap \mm P'=\mm M'.$ Now using 
Lemma \ref{rigidity}(i) where $\lind_R M'=0,\ell=1$, we get the desired claim.

Next, using Proposition \ref{lind_exact_seq} where $d_{M'}=0$ and $\lind_R 
M'=0$, we obtain that
\[
\lind_R N' \le \max\{1,\lind_R P'\},
\]
and that
\[
\lind_R P'  \le \lind_R N'.
\]
The first part of the result is already proved. Next we give a new proof for the result of Green and Mart\'inez-Villa.

\medskip

Now assume that $P'$ is a Koszul module and $M'\cap \mm^sP'=\mm^s M'$ for all $s\ge 1$. We show that $\lind_R N'=0$. Consider the diagram with obvious connecting and induced maps
\begin{displaymath}
\xymatrix{
0\ar[r] & \Tor_1(R/\mm^s,M') \ar[d]  \ar[r]^{\alpha_{M'}}         & \Tor_0(\mm^s/\mm^{s+1},M') \ar[d]^{\psi} \ar[r]& \Tor_0(R/\mm^{s+1},M') \ar[d]^{\kappa}\\
0\ar[r] & \Tor_1(R/\mm^s,P') \ar[d]^{\pi} \ar[r]^{\alpha_{P'}}    & \Tor_0(\mm^s/\mm^{s+1},P')\ar[d] \ar[r] & \Tor_0(R/\mm^{s+1},P') \\
& \Tor_1(R/\mm^s,N') \ar[d]^{\gamma} \ar[r]^{\alpha_{N'}} & \Tor_0(\mm^s/\mm^{s+1},N')\ar[d]        & \\ 
& \Tor_0(R/\mm^s,M')  \ar[r]                              & 0                                       & }          
\end{displaymath}
We know that $\lind_R N'\le 1$ by the preceding part, so by Theorem \ref{Sega's_theorem}, it is enough to show that $\Tor_1(R/\mm^s, N')\longrightarrow \Tor_0(\mm^s/\mm^{s+1},N')$ is injective for all $s\ge 1$. Clearly $\Img \gamma =(M'\cap \mm^s P')/\mm^s M'=0$, so $\pi$ is surjective. According to the hypothesis, $\psi$ is injective. By the snake lemma, $\Ker \alpha_{P'}\longrightarrow \Ker \alpha_{N'} \longrightarrow \Coker \alpha_{M'} \longrightarrow \Coker \alpha_{P'}$ is exact. But $\Ker \alpha_{P'}=0=\Ker \kappa$, hence  $\Ker \alpha_{N'}=0$.

\medskip

Finally, assume that $\lind_R N'=0$, then by the first part, $\lind_R P'\le \lind_R N'=0$. Assume that on the contrary, $M'/\mm^{s+1} M'\longrightarrow P'/\mm^{s+1}P'$ is not injective for some $s\ge 1$. Choose $s$ minimal with this property, we will show that $\lind_R N'\ge 1$. Again in the above diagram, $\Img \gamma=0$ by the choice of $s$. Using the snake lemma, we get $\Ker \alpha_{N'}\cong \Ker \kappa \neq 0$. Therefore $\lind_R N'\ge 1$, a contradiction. The proof of the theorem is completed.
\end{proof}

\begin{rem}

(i) The conclusion of the theorem is not true if $M'$ is not a Koszul module or $M'\cap \mm P'\neq \mm M'$. Firstly, consider the exact sequence
$$
0\longrightarrow (x^2,y^2)\longrightarrow (x^2,y^2,xz)\longrightarrow \frac{(x^2,y^2,xz)}{(x^2,y^2)}\longrightarrow 0
$$ 
over $R=k[x,y,z]$. Set $M'=(x^2,y^2)$, $P'=(x^2,y^2,xz)$ and $N'=(x^2,y^2,xz)/(x^2,y^2)$. Then $N'\cong R/(x)$, so $\lind_R N'=0$.  It is clear that $M'\cap \mm P'=\mm M'$, $M'$ is not Koszul, and $\lind_R P'=1>\lind_R N'$.

Secondly, consider the exact sequence
$$
0\longrightarrow D\longrightarrow (R/\mm^2)^r\longrightarrow N\longrightarrow 0
$$
in Example \ref{ex_correcting_terms}(iii). Note that $D$ is Koszul, and $D\subseteq \mm (R/\mm^2)^r$, hence the condition (ii) of Theorem \ref{pure_extension} is not satisfied. In this case, we also have $\lind_R N=\infty >\max\{1,\lind_R (R/\mm^2)^r\}=1$.

\medskip

(ii) In the situation of Theorem \ref{pure_extension}, it may happen that $\lind_R P'=0$ but $\lind_R N'=1$. Consider the exact sequence of $(R=)~k[x,y]$-modules
\[
0\longrightarrow (x^2) \longrightarrow (x^2,y) \longrightarrow (x^2,y)/(x^2)\longrightarrow 0.
\]
Clearly $\lind_R (x^2)=\lind_R (x^2,y)=0$, while $N'=(x^2,y)/(x^2) \cong R/(x^2)$, so $\lind_R N'=1$. 
\end{rem}

\medskip

\begin{rem}
The fact that $\Tor^{R'}_i(k,M')\xlongrightarrow{\Tor^R_i(k,\phi')} \Tor^{R'}_i(k,P')$ is always injective for all $i\ge 0$ was shown by Mart\'inez-Villa and Zacharia \cite[Proposition 3.2]{MZ} by different means. Note that therein, it is not necessary to assume that $R$ is a Koszul ring. A similar remark applies when comparing Corollary \ref{reg_pure_extension} below with \cite[Corollary 3.3]{MZ}.
\end{rem}
We also obtain interesting information about behavior of projective dimension and regularity for sequences satisfying the hypothesis of Theorem \ref{pure_extension}.
\begin{cor}[See {\cite[Corollary 3.3]{MZ}}]
\label{reg_pure_extension}
 With the hypotheses of Theorem \ref{pure_extension}, there is an equality 
$$
\projdim_R P' = \max\{\projdim_R M',\projdim_R N'\}.
$$ 

If $R$ is a standard graded algebra and $M', P',N'$ are finitely generated graded modules, then
\[
\reg_R P'=\max\{\reg_R M',\reg_R N'\}.
\]
\end{cor}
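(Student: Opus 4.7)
The plan is to extract the key fact already established in the proof of Theorem \ref{pure_extension}, namely that under the two hypotheses ($M'$ Koszul and $M' \cap \mm P' = \mm M'$) we have $d_{M'} = 0$, i.e., the maps
\[
\Tor^R_i(k,M') \xlongrightarrow{\Tor^R_i(k,\phi')} \Tor^R_i(k,P')
\]
are injective for every $i \ge 0$. This is the only input needed; the rest is formal.

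From this injectivity, the connecting homomorphism $\Tor^R_{i+1}(k,N') \to \Tor^R_i(k,M')$ in the long exact $\Tor$-sequence associated with $0 \to M' \to P' \to N' \to 0$ vanishes for every $i \ge 0$. Consequently the long exact sequence breaks up into short exact sequences
\[
0 \longrightarrow \Tor^R_i(k,M') \longrightarrow \Tor^R_i(k,P') \longrightarrow \Tor^R_i(k,N') \longrightarrow 0
\]
for all $i \ge 0$, and in the graded setting this decomposition is degree by degree. From such a sequence it is immediate that $\Tor^R_i(k,P') \ne 0$ if and only if at least one of $\Tor^R_i(k,M')$ or $\Tor^R_i(k,N')$ is nonzero, yielding
\[
\projdim_R P' = \max\{\projdim_R M',\, \projdim_R N'\}.
\]
In the graded case, the same short exact sequence in each internal degree $j$ gives $\Tor^R_i(k,P')_j \ne 0$ iff $\Tor^R_i(k,M')_j \ne 0$ or $\Tor^R_i(k,N')_j \ne 0$, and taking the supremum of $j-i$ over all such pairs produces
\[
\reg_R P' = \max\{\reg_R M',\, \reg_R N'\}.
\]

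There is really no obstacle here: the entire content has been absorbed into the $d_{M'}=0$ statement proved inside Theorem \ref{pure_extension}. The only thing worth flagging is that the inequality $\projdim_R P' \le \max\{\projdim_R M', \projdim_R N'\}$ (and likewise for regularity) is trivial from the long exact sequence for any short exact sequence, so the real content of the corollary is the reverse inequality, which is exactly what the injectivity of $\Tor^R_i(k,\phi')$ supplies by preventing cancellation between the contributions of $M'$ and $N'$ in $\Tor^R_i(k,P')$.
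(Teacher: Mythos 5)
Your proposal is correct and follows exactly the paper's own route: the paper likewise invokes the injectivity of $\Tor^R_i(k,\phi')$ (the fact $d_{M'}=0$ established inside the proof of Theorem \ref{pure_extension}) to split the long exact sequence into short exact sequences $0\to \Tor^R_i(k,M')\to \Tor^R_i(k,P')\to \Tor^R_i(k,N')\to 0$ and reads off both equalities from there. Your write-up merely makes explicit the degree-by-degree bookkeeping that the paper leaves to the reader.
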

\begin{proof}
For each $i\ge 0$, we have an exact sequence
\[
0\longrightarrow \Tor^R_i(k,M') \longrightarrow \Tor^R_i(k,P') \longrightarrow \Tor^R_i(k,N')\longrightarrow 0.
\]
This clearly implies our desired equalities.
\end{proof}

We also have the control over linearity defect for ``small inclusion" in a Koszul module. The next result demonstrates that if $N$ is any finitely generated $R$-module and $P$ is any Koszul module which surjects onto $N$ in such a way that $M=\Ker(P\to N) \subseteq \mm P$, the module $M$ behaves as if it was the first syzygy module of $N$. See Corollary \ref{reg_small_inclusion} for another result of this type. 

\begin{thm}
\label{small_inclusion}
Let $0\longrightarrow M\xlongrightarrow{\phi} P \xlongrightarrow{\lambda} N \longrightarrow 0$ be a short exact sequence of non-zero finitely generated $R$-modules where
\begin{enumerate}[\quad\rm(i)]
\item $P$ is a Koszul module;
\item $M\subseteq \mm P$.
\end{enumerate}
Then there are inequalities $\lind_R N-1 \le \lind_R M \le \max\{0,\lind_R N-1\}$. In particular, $\lind_R N=\lind_R M+1$ if $\lind_R M\ge 1$ and $\lind_R N\le 1$ if $\lind_R M=0$. 

Furthermore, $\lind_R N=0$ if and only if $M$ is a Koszul module and $M\cap \mm^{s+1}P=\mm^s M$ for all $s\ge 0$.
\end{thm}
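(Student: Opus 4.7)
The plan is to tackle the inequalities first---from which the ``in particular'' clause is immediate---and then prove the two directions of the equivalence. For the inequalities, the key observation is that the hypothesis $M\subseteq \mm P$ forces the induced map $M/\mm M\to P/\mm P$ to vanish, hence $\Tor_0^R(k,\phi)=0$. Combined with $\lind_R P=0$, Lemma~\ref{rigidity}(ii) applied at $\ell=1$ propagates this to every homological degree, so $d_P=0$. Substituting $d_P=0$ and $\lind_R P=0$ into Proposition~\ref{lind_exact_seq}(i) and~(iii) yields exactly the bounds $\lind_R N\le \lind_R M+1$ and $\lind_R M\le \max\{0,\lind_R N-1\}$.

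For the $(\Leftarrow)$ direction of the equivalence, I would pass to associated graded modules over $\gr_\mm R$. A degree-by-degree computation, using $M\subseteq \mm P$ for well-definedness and the filtration hypothesis $M\cap \mm^{s+1}P=\mm^sM$ together with the modular law for injectivity and for identifying the kernel of the second map, produces a short exact sequence of graded $\gr_\mm R$-modules
\[
0\longrightarrow (\gr_\mm M)(-1)\longrightarrow \gr_\mm P\longrightarrow \gr_\mm N\longrightarrow 0.
\]
By Proposition~\ref{linear_res}, $\Tor_i^{\gr_\mm R}(k,\gr_\mm M)$ and $\Tor_i^{\gr_\mm R}(k,\gr_\mm P)$ are concentrated in degree $i$, so $\Tor_i^{\gr_\mm R}(k,(\gr_\mm M)(-1))$ sits in degree $i+1$. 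In the long exact sequence of $\Tor^{\gr_\mm R}(k,-)$ all maps between $\Tor$'s of $(\gr_\mm M)(-1)$ and of $\gr_\mm P$ must vanish for degree reasons, leaving a short exact sequence that forces $\Tor_i^{\gr_\mm R}(k,\gr_\mm N)$ to be concentrated in degree $i$. Invoking Proposition~\ref{linear_res} a second time shows that $N$ is Koszul.

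For $(\Rightarrow)$ the already-established inequalities give $\lind_R M\le \max\{0,\lind_R N-1\}=0$, so $M$ is Koszul; for the filtration identity I would argue by contradiction, mimicking the last paragraph of the proof of Theorem~\ref{pure_extension}. Choose the minimal $s\ge 1$ with $M\cap \mm^{s+1}P\supsetneq \mm^sM$ and pick $m\in (M\cap \mm^{s+1}P)\setminus \mm^sM$. Via the connecting map $\delta_{s+1}\colon \Tor_1^R(R/\mm^{s+1},N)\to M/\mm^{s+1}M$ from the $\Tor$-long exact sequence of $0\to M\to P\to N\to 0$, lift $m+\mm^{s+1}M$ to some $x\in \Tor_1^R(R/\mm^{s+1},N)$. \c{S}ega's theorem applied to the Koszul modules $P$ and $N$ makes both the left and middle vertical maps in the commutative square comparing the $\Tor^R(R/\mm^{s+1},-)$- and $\Tor^R(R/\mm^s,-)$-long exact sequences vanish, so the rightmost vertical $c\colon M/\mm^{s+1}M\to M/\mm^sM$ must kill $\delta_{s+1}(x)$. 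Minimality of $s$ identifies $\Img(\delta_s)$ with $\mm^{s-1}M/\mm^sM$, so the equation $c(\delta_{s+1}(x))=0$ translates into $m\in \mm^sM$, contradicting the choice of $m$. The expected main obstacle is the exactness verification for the associated-graded sequence at each degree, especially the identification $\mm^sP\cap(\mm^{s+1}P+M)=\mm^{s+1}P+\mm^{s-1}M$, which requires careful bookkeeping with the modular law and the filtration hypothesis.
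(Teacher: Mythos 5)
Your proposal is correct and follows essentially the same route as the paper: the inequalities via $d_P=0$ from Lemma~\ref{rigidity}(ii) and Proposition~\ref{lind_exact_seq}, and the $(\Leftarrow)$ direction via exactness of $0\to(\gr_{\mm}M)(-1)\to\gr_{\mm}P\to\gr_{\mm}N\to 0$ combined with Proposition~\ref{linear_res}. Your $(\Rightarrow)$ direction is a mild repackaging of the paper's argument---you exploit naturality of the connecting map $\Tor_1^R(R/\mm^{s+1},N)\to M/\mm^{s+1}M$ under $R/\mm^{s+1}\to R/\mm^s$ together with \c{S}ega's theorem for the Koszul module $N$, where the paper instead compares with the auxiliary sequence $0\to M\to\mm P\to\mm N\to 0$---but both hinge on the same vanishing and your version goes through (indeed, the minimality of $s$ and the contradiction framing are not even needed, since $c\circ\delta_{s+1}=0$ directly yields $M\cap\mm^{s+1}P\subseteq\mm^sM$ for every $s$).
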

\begin{rem}
The conclusion of the above result is false in general if $P$ is not Koszul or $M\not\subseteq \mm P$. 

(i) Firstly, look at the sequence
$$0\longrightarrow D\longrightarrow (R/\mm^2)^r\longrightarrow N\longrightarrow 0$$
in Example \ref{ex_correcting_terms}(iii). It is easy to verify that $D\subseteq \mm (R/\mm^2)^r$ but $\lind_R (R/\mm^2)^r=1$, and $\lind_R D=0$ while $\lind_R N=\infty>\lind_R D+1=1$.

\medskip

(ii) Secondly, look at the sequence 
$$
0\longrightarrow M\longrightarrow \mm F\longrightarrow \mm N\longrightarrow 0
$$
in Example \ref{ex_correcting_terms}(ii). We know that $\mm F$ is Koszul, but $M\nsubseteq \mm^2F$. Indeed, otherwise $\mm M=0$ and thus $M$ would be Koszul, while in fact $\lind_R M=\infty$. We also know that $\max\{0,\lind_R (\mm N)-1\}=0<\lind_R M=\infty$.
\end{rem}
\begin{proof}[Proof of Theorem \ref{small_inclusion}]

For the first part: Observe that $d_P=0$, i.e., $\Tor^R_i(k,M)\xlongrightarrow{\Tor^R_i(k,\phi)} \Tor^R_i(k,P)$ is the zero 
map for each $i\ge 0$. Indeed, this follows Lemma \ref{rigidity}(ii) 
since $\lind_R P=0$ and $\Tor^R_0(k,M)\longrightarrow \Tor^R_0(k,P)$ is the zero map.

Now using Proposition \ref{lind_exact_seq} where $d_P=0$ and the fact that $P$ is Koszul, we see that
\[
\lind_R N\le \max\{0,0, \lind_R M+1\}=\lind_R M+1,
\]
and
\[
\lind_R M \le \max\{0,\lind_R N-1,0\}=\max\{0,\lind_R N-1\}.
\]
This gives the first part of the result.

\medskip

For the second part: first assume that $M$ is a Koszul module and $M\cap \mm^{s+1}P=\mm^s M$ for all $s\ge 0$.  Since $M\subseteq \mm P$, there is an exact sequence
\[
0\longrightarrow M \longrightarrow \mm P \longrightarrow \mm N \longrightarrow 0.
\]
We show that the induced sequence of graded $\gr_{\mm}R$-modules
\begin{equation}
\label{assoc_seq}
0\longrightarrow (\gr_{\mm}M)(-1) \longrightarrow \gr_{\mm}P \longrightarrow \gr_{\mm}N\longrightarrow 0
\end{equation}
is exact. Indeed, since $M\subseteq \mm P$, we have $0\longrightarrow P/\mm P \longrightarrow N/\mm N \longrightarrow 0$ is exact.
For each $s\ge 1$, we prove that the sequence below is exact
\[
0\longrightarrow \frac{\mm^{s-1}M}{\mm^sM} \longrightarrow \frac{\mm^sP}{\mm^{s+1}P} \xlongrightarrow{\overline{\lambda}} \frac{\mm^sN}{\mm^{s+1}N} \longrightarrow 0.
\]
Let $\bar{x}\in \Ker \overline{\lambda}$ where $x\in \mm^s P$. Then $\lambda(x)\in \mm^{s+1}N$, and as $\lambda$ is surjective, we see that $\lambda(x-y)=0$ for some $y\in \mm^{s+1}P$. This implies that $x-y \in M \cap \mm^s P =\mm^{s-1}M$; the last equality holds by the hypothesis. Now $y\in \mm^{s+1}P$, therefore
\[
\bar{x}\in \frac{\mm^{s-1}M}{\mm^sM},
\]
as desired. The exactness on the left follows from the equality $M\cap \mm^{s+1}P=\mm^s M$. So the sequence \eqref{assoc_seq} is exact.

Denote $A=\gr_{\mm}R$. Now the first two modules in \eqref{assoc_seq} have linear $A$-free resolutions, moreover $\reg_A (\gr_{\mm}M)(-1)=1$ and $\reg_A \gr_{\mm}P=0$. Therefore $\gr_{\mm}N$ also has $0$-linear $A$-free resolution. So $N$ is a Koszul $R$-module by Proposition \ref{linear_res}.

\medskip

Conversely, assume that $\lind_R N=0$. From the first part, we already know that $M$ must be Koszul. 

Since $M\subseteq \mm P$, we have the following commutative diagram in which the rows are exact and the vertical maps are natural inclusions
\begin{displaymath}
\xymatrix{
0  \ar[r] & M \ar[r] \ar[d]^{=} & \mm P \ar[r] \ar[d] & \mm N \ar[r] \ar[d] & 0\\
0  \ar[r] & M \ar[r] & P \ar[r] &  N \ar[r] & 0
}
\end{displaymath}
This induces the following commutative diagram of homology for each $s\ge 0$
\begin{displaymath}
\xymatrix{
\Tor_1(R/\mm^s,\mm N) \ar[d]^{\alpha}  \ar[r]^{\beta} & \Tor_0(R/\mm^s,M) \ar[r]^{\gamma} \ar[d]^{=} & \Tor_0(R/\mm^s,\mm P) \ar[d] \\
\Tor_1(R/\mm^s,N)   \ar[r] & \Tor_0(R/\mm^s,M) \ar[r] & \Tor_0(R/\mm^s,P) 
}
\end{displaymath}
Thanks to the fact that $N$ is Koszul and Lemma \ref{rigidity}, $\alpha$ is the zero map. Hence from the commutativity of the left square, we get that $\beta$ is also the zero map. In particular, $\Ker \gamma=0$, which is equivalent to the fact that $M\cap \mm^{s+1}P=\mm^sM$ for all $s\ge 0$. The proof of the theorem is completed.
\end{proof}

\begin{cor}
\label{reg_small_inclusion}
With the hypothesis of Theorem \ref{small_inclusion}, there is an equality 
$$
\projdim_R N =\max\{\projdim_R M+1,\projdim_R P\}.
$$ 

If $R$ is a standard graded algebra and $M, P, N$ are finitely generated graded modules then
\[
\reg_R N =\max\{\reg_R M-1,\reg_R P\}.
\]
\end{cor}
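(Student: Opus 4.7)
My plan is to extract from the proof of Theorem \ref{small_inclusion} the single key fact I need: the hypotheses $M \subseteq \mm P$ and $P$ Koszul force the map $\Tor^R_i(k,\phi) \colon \Tor^R_i(k,M) \to \Tor^R_i(k,P)$ to vanish for every $i \ge 0$. Indeed, for $i = 0$ this is exactly $M \subseteq \mm P$, and for $i \ge 1$ it follows by Lemma \ref{rigidity}(ii) applied with $\ell = 1$ (since $\lind_R P = 0$). This is precisely the calculation $d_P = 0$ already carried out in the proof of Theorem \ref{small_inclusion}, so I would simply cite it.

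Once $\Tor^R_i(k,\phi) = 0$ for all $i$, the long exact sequence of Tor induced by $0 \to M \to P \to N \to 0$ collapses into short exact sequences
\[
0 \longrightarrow \Tor^R_i(k,P) \longrightarrow \Tor^R_i(k,N) \longrightarrow \Tor^R_{i-1}(k,M) \longrightarrow 0
\]
for every $i \ge 0$ (with $\Tor^R_{-1}(k,M) = 0$). In the graded case all maps are degree-preserving, so the sequences give equalities of graded dimensions $\dim_k \Tor^R_i(k,N)_j = \dim_k \Tor^R_i(k,P)_j + \dim_k \Tor^R_{i-1}(k,M)_j$.

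From the collapsed sequences both conclusions are immediate. For projective dimension, $\Tor^R_i(k,N) = 0$ iff $\Tor^R_i(k,P) = 0$ and $\Tor^R_{i-1}(k,M) = 0$, so the largest $i$ with $\Tor^R_i(k,N) \ne 0$ equals $\max\{\projdim_R P, \projdim_R M + 1\}$. For regularity, using the degreewise equality and reindexing $i' = i - 1$ in the Tor of $M$,
\[
\reg_R N = \max\{\,j - i : \Tor^R_i(k,P)_j \ne 0\,\} \cup \{\,j - (i'+1) : \Tor^R_{i'}(k,M)_j \ne 0\,\} = \max\{\reg_R P,\ \reg_R M - 1\}.
\]

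There is no real obstacle here, since the hard work — producing a rigidity statement strong enough to kill the connecting maps at every homological degree — was already done inside Theorem \ref{small_inclusion}. The only mild subtlety is bookkeeping with the degree shift that appears when passing a syzygy through a $\Tor$: one has to remember that the copy of $\Tor_{i-1}(k,M)$ sitting inside $\Tor_i(k,N)$ contributes $j - i = (j) - (i'+1)$, which is exactly what produces the $-1$ in $\reg_R M - 1$ (and, symmetrically, the $+1$ in $\projdim_R M + 1$).
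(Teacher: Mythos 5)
Your proposal is correct and follows the same route as the paper: the paper's proof simply records the collapsed short exact sequences $0\to \Tor^R_i(k,P)\to \Tor^R_i(k,N)\to \Tor^R_{i-1}(k,M)\to 0$ (a consequence of $d_P=0$ established in the proof of Theorem \ref{small_inclusion}) and reads off both equalities. Your write-up just makes explicit the vanishing of $\Tor^R_i(k,\phi)$ and the degree bookkeeping that the paper leaves to the reader.
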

\begin{proof}
As noted in the proof of Theorem \ref{small_inclusion}, for each $i\ge 0$, the map $\Tor^R_i(k,M)\xlongrightarrow{\Tor^R_i(k,\phi)}\Tor^R_i(k,P)$ is trivial. Hence for each such $i$, we have a short exact sequence
\[
0\longrightarrow \Tor^R_i(k,P) \longrightarrow \Tor^R_i(k,N) \longrightarrow \Tor^R_{i-1}(k,M)\longrightarrow 0.
\]
This desired conclusion follows.
\end{proof}

We also recover the following result of Green and Mart\'inez-Villa  \cite[Proposition 5.5]{GM}.
\begin{cor}
\label{max_ideal}
Let $R$ be a Koszul local ring. Let $M\neq 0$ be a Koszul $R$-module. Then $\mm^i M$ is also a Koszul module for all $i\ge 1$.
\end{cor}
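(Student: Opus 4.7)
The plan is to prove the corollary by induction on $i \geq 1$, using Theorem \ref{small_inclusion} as the main engine at each step.

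For the base case $i=1$, I would consider the canonical short exact sequence
\[
0 \longrightarrow \mm M \longrightarrow M \longrightarrow M/\mm M \longrightarrow 0.
\]
In the notation of Theorem \ref{small_inclusion}, take $P = M$ (which is Koszul by hypothesis) and let the submodule be $\mm M$. The inclusion hypothesis $\mm M \subseteq \mm P = \mm M$ is automatic. Since $M$ is finitely generated, $M/\mm M$ is a finite-dimensional $k$-vector space, hence isomorphic as an $R$-module to a finite direct sum of copies of $k$. Because $R$ is a Koszul ring, $\lind_R k = 0$, and since linearity defect is compatible with finite direct sums, $\lind_R(M/\mm M) = 0$. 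Now I would invoke the ``only if'' direction of the furthermore clause of Theorem \ref{small_inclusion}: from $\lind_R(M/\mm M) = 0$ one concludes that $\mm M$ is a Koszul module (the intersection condition $\mm M \cap \mm^{s+1} M = \mm^s(\mm M)$ also falls out, but here it is automatic since both sides equal $\mm^{s+1} M$).

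For the inductive step, assume $\mm^{i-1} M$ is Koszul, and repeat the argument with $M$ replaced by $\mm^{i-1} M$, applying Theorem \ref{small_inclusion} to the sequence
\[
0 \longrightarrow \mm^i M \longrightarrow \mm^{i-1} M \longrightarrow \mm^{i-1} M / \mm^i M \longrightarrow 0.
\]
The module $\mm^{i-1} M$ plays the role of the Koszul ``$P$'', the submodule $\mm^i M = \mm\cdot \mm^{i-1} M$ sits inside $\mm (\mm^{i-1} M)$ trivially, and the quotient $\mm^{i-1} M / \mm^i M$ is annihilated by $\mm$, so again is a finite direct sum of copies of $k$ and therefore Koszul. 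Theorem \ref{small_inclusion} then yields that $\mm^i M$ is Koszul.

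There is no real obstacle here; the only point to flag is the degenerate case where $\mm^{i-1} M = \mm^i M$, which by Nakayama forces $\mm^{i-1} M = 0$, and then $\mm^i M = 0$ is Koszul by the convention $\lind_R 0 = 0$, so the induction terminates without incident. The entire proof thus reduces to packaging three observations: the Koszulness of $M/\mm M$ (from Koszulness of $R$ plus Nakayama), the trivial verification of the inclusion and intersection hypotheses in Theorem \ref{small_inclusion}, and a clean induction.
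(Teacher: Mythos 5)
Your proof is correct and follows essentially the same route as the paper: the same short exact sequence $0 \to \mm M \to M \to M/\mm M \to 0$, the same observation that $M/\mm M$ is a direct sum of copies of $k$ and hence Koszul since $R$ is a Koszul ring, and an appeal to Theorem \ref{small_inclusion} (the paper uses the inequality $\lind_R(\mm M) \le \max\{0,\lind_R(M/\mm M)-1\}$ from the first part, whereas you invoke the ``furthermore'' equivalence, but both are immediate consequences of the same theorem). Your explicit induction and your handling of the degenerate case $\mm^{i-1}M=0$ are just slightly more detailed versions of the paper's ``it is enough to consider the case $i=1$.''
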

\begin{proof}
It is enough to consider the case $i=1$. Look at the exact sequence
\[
0\longrightarrow \mm M \longrightarrow M \longrightarrow M/\mm M \longrightarrow 0.
\]
Note that $M/\mm M$ is an $R/\mm$-module, so as $R$ is a Koszul ring, $\lind_R M/\mm M=0$. Using the first part of Theorem \ref{small_inclusion}, we get 
$\lind_R (\mm M)=0$ as well.
\end{proof}

\section{Koszul filtrations}
\label{sect_Koszul_filtrations} 

In the graded setting, the notion of Koszul filtration in \cite{CTV} has proved to be useful to detect Koszul property of algebras. We extend this notion to the local setting in the present section.
\begin{defn}
\label{defn_filtr}
Let $(R,\mm,k)$ be a local ring. Let $\Fc$ be a collection of ideals. We say that $\Fc$ is a {\it Koszul filtration of} $R$ if the following simultaneously hold:
\begin{enumerate}
\item[(F1)] $(0),\mm \in \Fc$,
\item[(F2)] for every ideal $I\in \Fc$ and all $s\ge 1$, we have $I\cap \mm^{s+1}=\mm^s I$,
\item[(F3)] for every ideal $I\neq (0)$ of $\Fc$, there exist a finite filtration $(0)=I_0 \subset I_1 \subset \cdots \subset I_n=I$ and elements $x_j\in \mm$, such that for each $j=1,\ldots,n$, $I_j\in \Fc$, $I_j=I_{j-1}+(x_j)$ and $I_{j-1}:x_j\in \Fc$.
\end{enumerate}
\begin{rem}

(i) It is straightforward to check that the usual notion of Koszul filtration for standard graded algebras satisfies the conditions of Definition \ref{defn_filtr}. 

\medskip

(ii) Condition (F3) in our definition of Koszul filtration is more involved than the corresponding condition in \cite[Definition 1.1]{CTV}; the reason behind is to make the induction process in the proof of Theorem \ref{Koszul_filtration} below to work. In the case of graded Koszul filtrations, the condition is automatically satisfied.
\end{rem}
\end{defn}

The following theorem extends a well-known result about algebras with Koszul 
filtration \cite{CTV}. 

\begin{thm}
\label{Koszul_filtration}
Let $(R,\mm,k)$ be a local ring with a Koszul filtration $\Fc$. Then:
\begin{enumerate}[\quad \rm(i)]
\item For any ideal $I\in \Fc$, $R/I$ is a Koszul $R$-module.
\item $R$ is a Koszul ring.
\item $R/I$ is a Koszul ring for any $I\in \Fc$.
\end{enumerate}  
\end{thm}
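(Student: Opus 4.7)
The plan is to prove the three parts in order: establish (i) by an inductive argument, derive (ii) by applying (i) to $I = \mm$ (which lies in $\Fc$ by (F1)), and obtain (iii) from (i), (ii), and the change-of-rings result Theorem \ref{thm_ld0}. The bulk of the work sits in (i).

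For (i), I will proceed by induction on the minimal length $n(I)$ of a filtration of $I$ as in (F3), invoking the ``moreover'' part of Theorem \ref{small_inclusion} at each step. Take $I = I_n$ with filtration $0 = I_0 \subset \cdots \subset I_n = I$ and consider
\[
0 \longrightarrow R/(I_{n-1}:x_n) \xlongrightarrow{\cdot x_n} R/I_{n-1} \longrightarrow R/I_n \longrightarrow 0,
\]
where the first map identifies $R/(I_{n-1}:x_n)$ with $I_n/I_{n-1}$. Setting $M = I_n/I_{n-1}$, $P = R/I_{n-1}$, $N = R/I_n$, I want to verify the hypotheses of the ``moreover'' part of Theorem \ref{small_inclusion}. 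The inclusion $M \subseteq \mm P$ is immediate from $x_n \in \mm$. The intersection condition $M \cap \mm^{s+1}P = \mm^s M$ unwinds to $I_n \cap (\mm^{s+1}+I_{n-1}) = \mm^s I_n + I_{n-1}$ for all $s \ge 0$; for $s \ge 1$, I would write $y = a+b$ with $a \in \mm^{s+1}$, $b \in I_{n-1}$, observe $a = y-b \in I_n \cap \mm^{s+1}$, and apply (F2) to $I_n$ to get $a \in \mm^s I_n$; the case $s = 0$ reduces to $x_n \in \mm$.

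The main obstacle is to secure the Koszulness of both $P = R/I_{n-1}$ and $M = R/(I_{n-1}:x_n)$. The first follows from the induction hypothesis since $n(I_{n-1}) \le n-1$, but for the colon ideal $I_{n-1}:x_n$ the minimal filtration length is not a priori bounded by $n-1$, and a naive induction on $n(I)$ breaks. To circumvent this I will upgrade to a double induction on the pair $(i, n(I))$ and use \c{S}ega's characterization (Theorem \ref{Sega's_theorem}) in place of ``Koszul'': the goal becomes to show that for every $I \in \Fc$, every $i \ge 1$, and every $s \ge 0$, the map $\pi_i^{I,s}\colon \Tor^R_i(R/\mm^{s+1},R/I) \to \Tor^R_i(R/\mm^s,R/I)$ vanishes. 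The base $i=1$ is automatic from (F2), since $\Tor^R_1(R/\mm^t, R/I) \cong (I\cap \mm^t)/\mm^t I$ and (F2) forces the source to vanish. In the inductive step $i \ge 2$, the outer induction supplies $\pi_{i-1}^{J,s}=0$ for every $J \in \Fc$ (in particular for $J = I_{n-1}:x_n$), while the inner induction on $n(I)$ supplies $\pi_i^{I_{n-1},s}=0$. The proof would then be completed by a diagram chase on the bicomplex obtained by applying $\Tor^R_\ast(R/\mm^\ast, -)$ to the exact sequences $0 \to R/(I_{n-1}:x_n) \to R/I_{n-1} \to R/I_n \to 0$ and $0 \to \mm^s/\mm^{s+1} \to R/\mm^{s+1} \to R/\mm^s \to 0$, in the spirit of the ``moreover'' proof of Theorem \ref{small_inclusion}. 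The hard part will be closing this chase, which appears to require not only the injectivity of the connecting maps provided by the induction but also the vanishing of certain composites forced by the bicomplex structure.

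Part (ii) is then an instance of (i) for $I = \mm$, which belongs to $\Fc$ by (F1), and part (iii) follows by combining $\lind_R(R/I)=0$ (from (i)) with $\lind_R k=0$ (from (ii)) and invoking Theorem \ref{thm_ld0} for the surjection $R \twoheadrightarrow R/I$ to conclude $\lind_{R/I}k=\lind_R k = 0$.
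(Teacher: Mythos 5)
Your overall architecture for (i) --- \c{S}ega's criterion plus a double induction, with the outer induction on the homological degree $i$ ranging over \emph{all} ideals of $\Fc$ and the inner one along the filtration supplied by (F3) --- is exactly the paper's strategy, and your diagnosis of why a single induction on filtration length fails (the colon ideal $I_{n-1}:x_n$ has uncontrolled filtration length) is correct. Parts (ii) and (iii) are fine; for (iii) the paper passes to associated graded rings and uses Proposition \ref{prop_reg0}, whereas you invoke Theorem \ref{thm_ld0}, which works equally well and is not circular.

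The genuine gap is that you never close the diagram chase, and the missing ingredient is not something ``forced by the bicomplex structure'': it is the assertion that for every $\ell\le i-1$ the map
\[
\rho_\ell\colon \Tor^R_\ell(k,R/(I_{n-1}:x_n))\longrightarrow \Tor^R_\ell(k,R/I_{n-1})
\]
induced by multiplication by $x_n$ is zero. Concretely, if $u$ lies in the kernel of the connecting map $\Tor_i(R/\mm^s,R/I_n)\to\Tor_{i-1}(\mm^s/\mm^{s+1},R/I_n)$, your two inductive inputs let you kill the image of $u$ in $\Tor_{i-1}(R/\mm^s,R/(I_{n-1}:x_n))$ and hence lift $u$ to some $v\in\Tor_i(R/\mm^s,R/I_{n-1})$; but the image of $v$ under the connecting map for $R/I_{n-1}$ is then only known to lie in the image of $\rho_{i-1}$ (tensored with $\mm^s/\mm^{s+1}$), and without $\rho_{i-1}=0$ you cannot conclude $v=0$. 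This vanishing must be proved separately: one inducts on $\ell$, starting from $\ell=0$, where it holds because the image of $R/(I_{n-1}:x_n)$ is $x_n(R/I_{n-1})\subseteq \mm(R/I_{n-1})$, and propagates upward using the commutative square comparing $\rho_\ell$ with $\rho_{\ell-1}$ through the connecting maps for $0\to\mm/\mm^2\to R/\mm^2\to R/\mm\to 0$, together with the injectivity of the connecting maps for $R/I_{n-1}$ in degrees $\le i$ (which your two inductions do supply). A smaller slip: in your base case $i=1$, condition (F2) does not make the source $\Tor_1(R/\mm^{s+1},R/I)\cong (I\cap\mm^{s+1})/\mm^{s+1}I=\mm^sI/\mm^{s+1}I$ vanish; rather, the map to $(I\cap\mm^s)/\mm^sI$ is zero because its image equals $(\mm^sI+\mm^sI)/\mm^sI=0$.
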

\begin{proof}

(i) We may assume that $\mm\neq (0)$, otherwise $R$ is a field and $\Fc=\{(0)\}$. We prove by induction on $i\ge 1$ that for every ideal $I\in \Fc$ and for every $s\ge 0$, the map
\[
\Tor^R_i(R/\mm^s,R/I)\longrightarrow \Tor^R_{i-1}(\mm^s/\mm^{s+1},R/I)
\] 
is injective.

Firstly, assume that either $i=1$. Since $\Tor^R_1(R/\mm^s,R/I)=(I\cap \mm^s)/\mm^sI$, the natural map 
\[
\Tor^R_1(R/\mm^{s+1},R/I)\longrightarrow \Tor^R_1(R/\mm^s,R/I)
\]
is zero by condition (F2) for Koszul filtrations. Hence the connecting map is injective. 

Now assume that $i\ge 2$ and the desired statement already holds up to $i-1$. It is harmless to assume that $I\neq (0)$. By condition (F2) for Koszul filtrations, there exist a finite filtration $(0)=I_0 \subset I_1 \subset \cdots \subset I_n=I$ and elements $x_j\in \mm$ for $j=1,\ldots,n$ such that for all $1\le j \le n$, $I_j\in \Fc, I_j=I_{j-1}+(x_j)$ and $I_{j-1}:x_j\in \Fc$. To our purpose, it suffices to prove by induction on $j$ that for every $0\le j\le n$ and for every $s\ge 0$, the map 
\[
\Tor^R_i(R/\mm^s,R/I_j)\longrightarrow \Tor^R_{i-1}(\mm^s/\mm^{s+1},R/I_j)
\] 
is injective.

Indeed, this is true if $j=0$ since $I_0=(0)$. Assume that $1\le j\le n$ and the statement is true up to $j-1$.

Denote $x=x_j, L=I_{j-1}$ so that $I_j=L+(x)$. We have an exact sequence
\[
0\longrightarrow R/(L:x) \xlongrightarrow{\cdot x} R/L\longrightarrow R/I_j \longrightarrow 0.
\]
We have $\mm/\mm^2\cong k^t$ for some $t\ge 1$. Consider the commutative diagram with obvious connecting and induced maps
\begin{displaymath}
\xymatrix{\Tor^R_{\ell}(R/\mm,R/(L:x)) \ar[r]^{\rho_{\ell}} \ar[d] & \Tor^R_{\ell}(R/\mm,R/L) \ar[d]^{\tau_{\ell}}\\
\Tor^R_{\ell-1}(\mm/\mm^2,R/(L:x))  \ar[r]^{\rho^t_{\ell-1}}     & \Tor^R_{\ell-1}(\mm/\mm^2,R/L).
}
\end{displaymath}
We prove by induction on $\ell$ that $\rho_{\ell}$ is the zero map for all $0\le \ell \le i$. Indeed, the case $\ell=0$ follows since $R/(L:x) \subseteq x(R/L)$. Assume that $1\le \ell\le i$ and $\rho_j$ is the trivial map for all $j\le \ell-1$. Observe that $\tau_{\ell}$ is injective: if $\ell < i$ then this follows from the induction on $i$, while if $\ell=i$ then, recalling that $L=I_{j-1}$, this follows from the induction on $j$. Since $\rho_{\ell-1}$ is the zero map, from the diagram, so is $\rho_{\ell}$. This finishes the induction on $\ell$.

Now consider the diagram with obvious connecting and induced maps
\begin{displaymath}
\xymatrix{
         &                                                      & \Tor_{i-1}(\mm^s/\mm^{s+1},R/(L:x)) \ar[d]^{\rho_{i-1}}\\
0 \ar[r] & \Tor_i(R/\mm^s,R/L) \ar[d] \ar[r]^{\alpha^i_2} & \Tor_{i-1}(\mm^s/\mm^{s+1},R/L)\ar[d] \\
         & \Tor_i(R/\mm^s,R/I_j) \ar[d] \ar[r]^{\alpha^i_3} & \Tor_{i-1}(\mm^s/\mm^{s+1},R/I_j)\ar[d]\\ 
0 \ar[r] & \Tor_{i-1}(R/\mm^s,R/(L:x))  \ar[r]^{\alpha^{i-1}_1}        & \Tor_{i-2}(\mm^s/\mm^{s+1},R/(L:x)) }          
\end{displaymath}
By the hypothesis of the induction on $j$ (respectively, on $i$), the map $\alpha^i_2$ (resp.~ $\alpha^{i-1}_1$) are injective. We know from the previous paragraph that $\rho_{i-1}$ is the zero map. Hence by a snake lemma argument, $\alpha^i_3$ is also injective. This finishes the induction on $j$, and also the proof of part (i).

\medskip

(ii) From (i), taking $I=\mm$, we get that $\lind_R k=0$. This shows that $R$ is Koszul.

\medskip

(iii) Since $R/I$ is a Koszul $R$-module, the module $\gr_{\mm}(R/I)$ has linear resolution over $\gr_{\mm} R$. This shows that 
$$\reg_{\gr_{\mm}(R/I)}k= \reg_{\gr_{\mm}R}k=0,$$
where the first equality follows from Proposition \ref{prop_reg0}(iii), and the second from part (ii). Therefore $\gr_{\mm}(R/I)$ is a Koszul algebra, equivalently, $R/I$ is a Koszul ring.
\end{proof}

\section{Applications}
\label{sect_applications}
\subsection*{Change of rings}
Recall the following well-known change of rings statement concerning regularity (see, for example, \cite[Proposition 3.3]{CDR}).
\begin{prop}
\label{prop_reg0}
Let $R\to S$ be a surjection of standard graded $k$-algebras. Let $N$ be a finitely generated graded $S$-module. Then:
\begin{enumerate}[\quad \rm(i)]
\item It always holds that $\reg_R N\le \reg_R S+\reg_S N$.
\item If $\reg_R S\le 1$ then $\reg_S N\le \reg_R N$.
\item In particular, if $\reg_R S=0$ then $\reg_R N=\reg_S N$.
\end{enumerate}
\end{prop}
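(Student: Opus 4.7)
Write $t^A_i(M) := \max\{d : \Tor^A_i(k,M)_d \ne 0\}$ for a finitely generated graded $A$-module $M$, so that $\reg_A M = \sup_i\bigl(t^A_i(M) - i\bigr)$. My plan is to derive (i) by an iterated mapping cone (Cartan--Eilenberg) construction, (ii) by an induction on homological degree using the $S$-syzygies of $N$, and (iii) as a formal consequence of (i) and (ii).

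For (i), I take a minimal graded $S$-free resolution $G_\bullet \to N$, writing $G_q = \bigoplus_j S(-a_{qj})$ with $t^S_q(N) = \max_j a_{qj} \le q + \reg_S N$, so that $\reg_R G_q = \reg_R S + t^S_q(N)$. Resolving each $G_q$ minimally over $R$ by $P_{\bullet,q} \to G_q$ and assembling a Cartan--Eilenberg double complex produces an $R$-free resolution of $N$: one spectral sequence of the double complex collapses because the columns are $R$-resolutions of the $G_q$, and in the other the rows reduce to $N$ in degree $0$ since $G_\bullet$ is an $S$-resolution of $N$. In homological degree $n$ the resulting free term is $\bigoplus_{p+q=n} P_{p,q}$, and each graded summand sits in degrees at most $p + \reg_R G_q = p + \reg_R S + t^S_q(N) \le n + \reg_R S + \reg_S N$. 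Hence $t^R_n(N) \le n + \reg_R S + \reg_S N$ for every $n$, which gives (i).

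For (ii), assume $\reg_R S \le 1$, and let $G_\bullet \to N$ be a minimal graded $S$-free resolution with $i$-th $S$-syzygy $\Omega^S_i N$. Because $R$ and $S$ share the graded residue field $k$, we have $\mm M = \nn M$ for every $S$-module $M$, hence $t^R_0(M) = t^S_0(M)$; in particular $t^S_i(N) = t^S_0(\Omega^S_i N) = t^R_0(\Omega^S_i N) \le \reg_R \Omega^S_i N$. I then prove by induction on $i$ the estimate $\reg_R \Omega^S_i N \le i + \reg_R N$; combined with the previous inequality this yields $t^S_i(N) - i \le \reg_R N$, and hence $\reg_S N \le \reg_R N$. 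The base case $i=0$ is $\Omega^S_0 N = N$. For the step, the short exact sequence $0 \to \Omega^S_{i+1} N \to G_i \to \Omega^S_i N \to 0$ gives the standard inequality
\[
\reg_R \Omega^S_{i+1} N \le \max\{\reg_R G_i,\, \reg_R \Omega^S_i N + 1\}.
\]
The hypothesis $\reg_R S \le 1$ together with the inductive bound supplies $\reg_R G_i = \reg_R S + t^S_i(N) \le 1 + (i + \reg_R N)$, and also $\reg_R \Omega^S_i N + 1 \le (i+1) + \reg_R N$, so both terms are at most $(i+1) + \reg_R N$, closing the induction.

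Part (iii) is then immediate: when $\reg_R S = 0$, (i) gives $\reg_R N \le \reg_S N$ and (ii) gives $\reg_S N \le \reg_R N$, hence equality. I expect the delicate step to be (ii): the argument depends on the coincidence $t^R_0(M) = t^S_0(M)$ for $S$-modules (which translates $S$-syzygy data into $R$-regularity data) and on the linear bound $\reg_R S \le 1$ being exactly what keeps $\reg_R G_i$ to grow by at most $1$ at each inductive step.
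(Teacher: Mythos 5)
Your proof is correct. Note first that the paper itself does not prove this proposition: it is stated as a well-known change-of-rings fact and delegated to \cite[Proposition 3.3]{CDR}, so there is no internal argument to compare against; what you have written is essentially the standard proof that the cited source records. Two small comments on your write-up. For (i), the phrase ``Cartan--Eilenberg double complex'' with \emph{minimal} $R$-resolutions $P_{\bullet,q}$ of the $G_q$ in the columns is slightly off: a genuine Cartan--Eilenberg resolution of $G_\bullet$ has columns built from resolutions of cycles and boundaries (whose $R$-regularities you do not control), while with the minimal $P_{\bullet,q}$ one generally only gets a twisted total complex. This does not affect your conclusion, because the ``iterated mapping cone'' version you also invoke is exactly right: applying $\Tor^R(k,-)$ to the syzygy sequences $0\to \Omega^S_{q+1}N\to G_q\to \Omega^S_qN\to 0$ and iterating gives $t^R_n(N)\le \max_{0\le q\le n} t^R_{n-q}(G_q)\le n+\reg_R S+\reg_S N$ with no resolution of the total complex needed. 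For (ii), your argument is complete and the two ingredients you single out are indeed the crux: the identity $t^R_0(M)=t^S_0(M)$ for $S$-modules (valid because $\mm M=\nn M$ along a surjection of standard graded algebras), which converts $t^S_i(N)=t^S_0(\Omega^S_iN)$ into an $R$-regularity bound, and the hypothesis $\reg_R S\le 1$, which keeps $\reg_R G_i=\reg_R S+t^S_i(N)\le (i+1)+\reg_R N$ in the inductive step. Part (iii) then follows formally, as you say.
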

Now we deduce from Theorem \ref{small_inclusion} the following analog of 
Proposition \ref{prop_reg0}(iii). Recall from \cite{IyR} that $R$ is called {\em 
absolutely Koszul} if every finitely generated $R$-module $M$ has finite 
linearity defect.
\begin{thm}
\label{thm_ld0}
Let $(R,\mm)\to (S,\nn)$ be a surjection of local rings such that $\lind_R S=0$. Then for any finitely generated $S$-module $N$, there is an equality
$$\lind_R N=\lind_S N.$$ 
In particular, $\glind S\le \glind R$. If $R$ is absolutely Koszul then so is 
$S$.
\end{thm}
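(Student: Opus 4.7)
The strategy is induction on $d := \lind_S N$ via the minimal $S$-free resolution of $N$, combined with a base-case equivalence
\[
\lind_R N = 0 \Longleftrightarrow \lind_S N = 0
\]
established by passing to associated graded rings. For the base case, Proposition \ref{linear_res} rephrases $\lind_R N = 0$ as saying that $\gr_\mm N$ has a $0$-linear resolution over $\gr_\mm R$, and analogously for $S$. Since $\nn = \mm S$, the graded modules $\gr_\mm N$ and $\gr_\nn N$ coincide, and the $\gr_\mm R$-module structure on $\gr_\nn N$ factors through the graded surjection $\gr_\mm R \twoheadrightarrow \gr_\nn S$ coming from $R \twoheadrightarrow S$. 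The hypothesis $\lind_R S = 0$ makes $\gr_\nn S$ a $0$-linear $\gr_\mm R$-module, so Proposition \ref{prop_reg0}(iii) yields $\reg_{\gr_\mm R} \gr_\nn N = \reg_{\gr_\nn S} \gr_\nn N$, and the equivalence follows.

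For the main step, let $F_\bullet \to N$ be a minimal $S$-free resolution with syzygies $\Omega_i := \Omega^S_i(N)$. Each $F_i$ is a finite direct sum of copies of $S$, and hence is Koszul as an $R$-module (since $\lind_R S = 0$ and the linearity defect is stable under finite direct sums), while minimality over $S$ forces $\Omega_{i+1} \subseteq \nn F_i = \mm F_i$. Thus each short exact sequence $0 \to \Omega_{i+1} \to F_i \to \Omega_i \to 0$ satisfies the hypotheses of Theorem \ref{small_inclusion} over $R$, yielding $\lind_R \Omega_i = \lind_R \Omega_{i+1} + 1$ whenever $\lind_R \Omega_{i+1} \ge 1$ and $\lind_R \Omega_i \le 1$ when $\lind_R \Omega_{i+1} = 0$. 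Over $S$, Corollary \ref{cor_freemod}(a) yields the sharper formula $\lind_S \Omega_i = \lind_S N - i$ for $i \le \lind_S N$ and $\lind_S \Omega_i = 0$ for $i \ge \lind_S N$.

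If $d$ is a positive integer, then $\lind_S \Omega_d = 0$ while $\lind_S \Omega_i \ge 1$ for $i < d$, so by the base-case equivalence, $\lind_R \Omega_d = 0$ and $\lind_R \Omega_i \ge 1$ for $i < d$. Walking back up with Theorem \ref{small_inclusion} pins down $\lind_R \Omega_{d-1} = 1$ (from both $\lind_R \Omega_{d-1} \ge 1$ and the upper bound $\lind_R \Omega_{d-1} \le 1$) and then inductively $\lind_R \Omega_i = d - i$ for $0 \le i \le d$, giving $\lind_R N = d$. If $d = \infty$, suppose for contradiction that $\lind_R N = e < \infty$; iterating the upper bound $\lind_R \Omega_{i+1} \le \max\{0, \lind_R \Omega_i - 1\}$ of Theorem \ref{small_inclusion} gives $\lind_R \Omega_e = 0$, whence $\lind_S \Omega_e = 0$ by the base case, contradicting $\lind_S \Omega_e = \infty$. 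The $\glind$ and absolute-Koszulness statements then follow at once from the equality $\lind_R N = \lind_S N$. The main obstacle is the base-case equivalence, which forces one to translate Koszulness between the local rings $R$ and $S$ via the graded change-of-rings result for regularity; the subsequent syzygy manipulation is routine bookkeeping with Theorem \ref{small_inclusion}.
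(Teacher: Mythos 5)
Your proposal is correct and follows essentially the same route as the paper: the base-case equivalence $\lind_R N=0\Leftrightarrow\lind_S N=0$ via $\gr_{\mm}$ and Proposition \ref{prop_reg0}(iii), then climbing the minimal $S$-free resolution with Theorem \ref{small_inclusion} (the paper inducts on $\lind_R N$ rather than $\lind_S N$, but this is only a difference in bookkeeping). The contradiction argument for the infinite case also matches the paper's.
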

\begin{proof}
We claim that $\lind_R N=0$ if and only if $\lind_S N=0$. Denote $A=\gr_{\mm}R, B=\gr_{\mm}S, U=\gr_{\mm}N$ we get $\reg_A B=0$ by hypothesis.  Hence applying Proposition \ref{prop_reg0}, we get that $\reg_B U=\reg_A U$. The claim then follows from the last equality.
 
To prove that $\lind_R N=\lind_S N$, firstly consider the case $\lind_R N=\ell<\infty$. We prove by induction on $\ell$. The case $\ell=0$ was treated above. 

Assume that $\ell\ge 1$, then by the claim, it follows that $\lind_S N\ge 1$. Let $0\to M\to P\to N \to 0$ be the beginning of the minimal $S$-free resolution of $N$. Since $M\subseteq \mm P$ and $\lind_R P=\lind_R S=0$, we get from Theorem \ref{small_inclusion} that $\lind_R M=\ell-1$. Since $\lind_S N\ge 1$, we also have $\lind_S M=\lind_S N-1$. By induction hypothesis, $\lind_R M=\lind_S M$, thus $\lind_R N=\lind_S N$.

Now consider the case $\lind_R N=\infty$ and by way of contradiction, assume that $\lind_S N<\infty$. Again looking at the syzygy modules of $N$ as an $S$-module and using Theorem \ref{small_inclusion}, we reduce the general situation to the case $\lind_R N=\infty$ and $\lind_S N=0$. The last two equalities contradict the claim above. So in any case $\lind_R N=\lind_S N$. 

The remaining assertions are obvious.
\end{proof}
\begin{ex}
The following example shows that in Theorem \ref{thm_ld0}, one cannot weaken the hypothesis that $R\to S$ is surjective to ``$R\to S$ is a finite morphism". Take $R=k$ and $S=k[x,y]/(x^2,y^2)$. Then $S$ is a finite, free $R$-module so $\lind_R S=0$. On the other hand, by \cite[Theorem 6.7]{HIy}, $\glind S=\infty$ and $\glind R=0$.  Hence the conclusion of Theorem \ref{thm_ld0} does not hold for $R\to S$.
\end{ex}
\begin{rem}
The analog of Proposition \ref{prop_reg0}(i) for linearity defect is completely false: even if $R\to S$ is a Golod map of Koszul algebras (hence $\lind_R S=1$), it is possible for some Koszul $S$-module $N$ to have infinite linearity defect over $R$. For example, take $R=k[x,y,z,t]/((x,y)^2+(z,t)^2)$ as in Example \ref{ex_correcting_terms}. Consider the map $R\to R/\mm^2$. Since $R$ is Koszul, $R\to S$ is a Golod map. Consider the $R$-module $N$ in Example \ref{ex_correcting_terms}. Recall that $N$ is also an $S$-module, and of course $\lind_S N=0$. On the other hand, we know that $\lind_R N=\infty$.

This example also shows that the conclusion of Theorem \ref{thm_ld0} does not hold if $\lind_R S\ge 1$.
\end{rem}
\begin{rem}
\label{rem_ld1}
In view of Proposition \ref{prop_reg0}(ii), we can ask:

Let $R\to S$ be a surjection of local rings such that $\lind_R S\le 1$. Is it true that $\lind_S N\le \lind_R N$ for any finitely generated $S$-module $N$?

But the answer is {\em no}, even if $R$ and $S$ are Koszul.  Indeed, take $R=k[x,y]/(x^2)$ and $S=R/(y^2)$, then $\lind_R S=1$ and from Lemma \ref{lem_hypersurfaces}, $\glind R=1$. However as noted above, $\glind S=\infty$. Hence the question has a negative answer. If we do not insist that $S$ is Koszul, we can take $R=k[x,y]$ and $S=k[x,y]/(x^3)$. Then $\lind_R S=1$, $\lind_S k=\infty$ while $\lind_R k=0$.
\end{rem}
As a corollary to Theorem \ref{thm_ld0}, we prove that specializations of 
absolutely Koszul algebras are again absolutely Koszul. There are many open questions concerning absolutely Koszul rings; see \cite[Remark 3.10]{CINR}. By \cite[Theorem 2.11]{IyR}, if $R$ is a graded 
absolutely Koszul algebra and $x\in R_1$ an $R$-regular linear form such that 
$R/(x)$ is absolutely Koszul, then so is $R$. The converse is given by
\begin{cor}
\label{cor_aK}
Let $(R,\mm)$ be an absolutely Koszul local ring and $x\in \mm\setminus \mm^2$ be such that $\overline{x}\in \mm/\mm^2$ is $\gr_{\mm}R$-regular. Then $R/(x)$ is also absolutely Koszul.
\end{cor}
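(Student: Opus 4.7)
The plan is to deduce the corollary from Theorem \ref{thm_ld0} by showing that the quotient map $R \to S := R/(x)$ satisfies $\lind_R S = 0$; once this is established, Theorem \ref{thm_ld0} yields $\glind S \le \glind R < \infty$, which is exactly the statement that $S$ is absolutely Koszul.

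So the whole task reduces to proving that $S = R/(x)$ is a Koszul $R$-module. By Proposition \ref{linear_res}, this is equivalent to showing that $\gr_{\mm}(R/(x))$ has a $0$-linear free resolution over the graded ring $A := \gr_{\mm} R$. Here I would invoke the classical fact that regularity of $\overline{x} \in \mm/\mm^2$ on $A$ forces the natural surjection
\[
A/(\overline{x}) \longrightarrow \gr_{\mm}(R/(x))
\]
to be an isomorphism; equivalently, that $(x) \cap \mm^{s+1} = x\mm^s$ for all $s \ge 0$. This is a standard consequence of $\gr$-regularity (and can also be obtained by examining the filtration $(x) \cap \mm^s / (x)\cap \mm^{s+1}$ and using that multiplication by $\overline{x}$ has zero kernel on $A$ in each degree).

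Once we know $\gr_{\mm}(R/(x)) \cong A/(\overline{x})$, the Koszul complex on the single linear form $\overline{x}$ provides the linear $A$-free resolution
\[
0 \longrightarrow A(-1) \xlongrightarrow{\;\overline{x}\;} A \longrightarrow A/(\overline{x}) \longrightarrow 0,
\]
which is $0$-linear. Hence $\gr_{\mm}(R/(x))$ has a $0$-linear resolution over $A$, so by Proposition \ref{linear_res} we have $\lind_R R/(x) = 0$, and the application of Theorem \ref{thm_ld0} finishes the proof.

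The only subtlety is the identification $\gr_{\mm}(R/(x)) \cong A/(\overline{x})$ from $\gr$-regularity of $\overline{x}$, but this is well-known and rather mechanical; once it is in hand, the rest is an immediate combination of Proposition \ref{linear_res} and Theorem \ref{thm_ld0}.
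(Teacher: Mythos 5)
Your proposal is correct and follows exactly the paper's route: establish $\lind_R R/(x)=0$ from the $\gr_{\mm}R$-regularity of $\overline{x}$ (via Proposition \ref{linear_res}) and then apply Theorem \ref{thm_ld0}. The paper states this in two lines without spelling out the identification $\gr_{\mm}(R/(x))\cong \gr_{\mm}R/(\overline{x})$, which you correctly supply.
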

\begin{proof}
Since $\overline{x}$ is $\gr_{\mm}R$-regular, we get that $\lind_R R/(x)=0$. The result follows from Theorem \ref{thm_ld0}.
\end{proof}
\begin{ex}
Let $(R,\mm)\to (S,\nn)$ be a finite, flat morphism of local rings. One may ask whether for any finitely generated $S$-module $N$ such that $\lind_S N=0$, we also have $\lind_R N=0$? This is true if $\projdim_S N=0$: in that case $\projdim_R N=0$. But in general, this is far from the truth. For any $n\ge 1$, take $R=k[x_1,\ldots,x_n]$ and $S=k[x_1,\ldots,x_n,y_1,\ldots,y_n]/(y_1^2,\ldots,y_n^2)$. We have a surjection $S\longrightarrow k[x_1,\ldots,x_n]/(x_1^2,\ldots,x_n^2)$ given by 
\begin{align*}
x_i&\mapsto x_i,\\
y_i&\mapsto x_i,
\end{align*}
for $1\le i\le n$.
The kernel is $(x_1-y_1,\ldots,x_n-y_n)$. Since $x_1-y_1,\ldots,x_n-y_n$ is an $S$-regular sequence, we see that $\lind_S k[x_1,\ldots,x_n]/(x_1^2,\ldots,x_n^2)=0$. On the other hand, direct computations with the Koszul complex show that $\lind_R k[x_1,\ldots,x_n]/(x_1^2,\ldots,x_n^2)=n$.
\end{ex}


\subsection*{Modules with linear quotients}
Recall the following notion due to Herzog and Hibi.
\begin{defn}[Componentwise linear modules]
Let $R$ be a standard graded $k$-algebra. Let $M$ be a finitely generated graded $R$-module. Then $M$ is said to be componentwise linear if for every $d\in \Z$, the submodule $M_{\left<d\right>}=(m\in M: \deg m=d)\subseteq M$ has $d$-linear resolution as an $R$-module.
\end{defn}
R\"omer proved in his thesis \cite{Ro} the following characterization of componentwise linear modules over Koszul algebras; see, e.g., \cite[Theorem 5.6]{IyR} for a proof.
\begin{thm}[R\"omer]
\label{thm_Roemer}
Assume that $R$ is a Koszul algebra. Then for any finitely generated graded $R$-module $M$, the following are equivalent:
\begin{enumerate}[\quad \rm(i)]
\item $M$ is componentwise linear;
\item $M$ is a Koszul module over $R$.
\end{enumerate}
\end{thm}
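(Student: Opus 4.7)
I would prove the two implications separately: $(i) \Rightarrow (ii)$ via Theorem~\ref{pure_extension} by induction, and $(ii) \Rightarrow (i)$ via Proposition~\ref{linear_res} together with the structure of the associated graded module.

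For $(i) \Rightarrow (ii)$, I would induct on the number $s$ of distinct degrees in which $M$ has a minimal generator. If $s = 1$, so $M$ has a $d$-linear resolution $F$, then $\linp^R F = F$ and hence $H_i(\linp^R F) = 0$ for all $i \ge 1$, giving $\lind_R M = 0$. For the inductive step, let $a$ be the smallest generating degree of $M$. Then $M_{\langle a \rangle} = R \cdot M_a$ has $a$-linear resolution by componentwise linearity and is Koszul by the base case. A standard fact in the Herzog--Hibi theory \cite{HH} of componentwise linear modules says that $M / M_{\langle a \rangle}$ is again componentwise linear, now with $s - 1$ generating degrees, hence Koszul by the induction hypothesis. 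Applying Theorem~\ref{pure_extension} to the short exact sequence $0 \to M_{\langle a \rangle} \to M \to M / M_{\langle a \rangle} \to 0$ then gives $\lind_R M \le \lind_R (M / M_{\langle a \rangle}) = 0$. The only hypothesis of Theorem~\ref{pure_extension} requiring verification is $M_{\langle a \rangle} \cap \mm M = \mm M_{\langle a \rangle}$: the inclusion $\supseteq$ is trivial, and for $\subseteq$, if $x$ is a homogeneous element of $M_{\langle a \rangle} \cap \mm M$, then since $M$ vanishes below degree $a$, one has $\deg x > a$ (or $x = 0$); writing $x = \sum r_i m_i$ with $m_i \in M_a$ then forces each $r_i$ to have positive degree, so $x \in \mm M_{\langle a \rangle}$.

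For $(ii) \Rightarrow (i)$, I would invoke Proposition~\ref{linear_res}: $M$ being Koszul is equivalent to $\gr_{\mm} M$ having $0$-linear resolution over $\gr_{\mm} R$, which in the standard graded setting is canonically $R$ itself. The strategy is then to exploit the internal grading on $M$ to decompose $\gr_{\mm} M$ into summands indexed by the generating degrees of $M$, and to match the linear strands of the resolution of $\gr_{\mm} M$ (over $\gr_{\mm} R$) with the resolutions of the componentwise pieces $M_{\langle d \rangle}$, up to appropriate degree shifts. Extracting the $d$-linearity of each $M_{\langle d \rangle}$ one strand at a time, together with a descending induction on $d$ using the short exact sequences relating the componentwise submodules, would then yield componentwise linearity of $M$.

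The main technical obstacle lies in this converse direction: pinning down, in the standard graded setting, how the bi-grading on $\gr_{\mm} M$ (coming from the internal grading on $M$ and the $\mm$-adic filtration) decomposes $\gr_{\mm} M$ into pieces, and how the linear strands of its resolution translate back to the componentwise pieces $M_{\langle d \rangle}$ of $M$. The forward direction, by comparison, is a routine induction whose only real input is Theorem~\ref{pure_extension} plus a trivial degree check.
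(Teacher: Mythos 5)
The paper does not prove this theorem: it is quoted from R\"omer's thesis with a pointer to \cite[Theorem 5.6]{IyR}, so there is no in-paper argument to compare against and your attempt has to stand on its own. Your direction (i)$\Rightarrow$(ii) essentially does: the base case, the degree check $M_{\langle a\rangle}\cap\mm M=\mm M_{\langle a\rangle}$, and the application of Theorem \ref{pure_extension} (which gives $\lind_R M\le\lind_R(M/M_{\langle a\rangle})$) are all correct, and this matches how the paper itself deploys Theorem \ref{pure_extension} in Proposition \ref{linear_quotients}. The one point you should not wave at is the claim that $M/M_{\langle a\rangle}$ is again componentwise linear: it is true, but it is not in \cite{HH} (which treats ideals), and its proof is not free --- one needs $M_{\langle d\rangle}\cap M_{\langle a\rangle}=\mm^{d-a}M_{\langle a\rangle}$ and then the injectivity of $\Tor^R_i(k,\mm^{d-a}M_{\langle a\rangle})\to\Tor^R_i(k,M_{\langle d\rangle})$ (i.e.\ the same mechanism as Theorem \ref{pure_extension} and Corollary \ref{max_ideal}) to conclude that $N_{\langle d\rangle}$ has a $d$-linear resolution. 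Cite a precise source or prove it.

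The genuine gap is (ii)$\Rightarrow$(i), which you yourself flag as "the main technical obstacle": what you have written there is a plan, not a proof, and the plan as stated has a real defect. The module $\gr_{\mm}M$ does \emph{not} decompose into summands indexed by the generating degrees of $M$: the graded piece $(\mm^iM/\mm^{i+1}M)_j$ is a sum $\sum_{l\ge i}R_lM_{j-l}$ modulo $\mm^{i+1}M$, and there is no canonical splitting recording which generating degree an element "came from." What does decompose is the \emph{linear part} $\linp^RF$ of the minimal free resolution, as a direct sum of linear strands indexed by the internal degrees of the basis elements of the $F_i$; the substance of R\"omer's theorem is precisely the identification of the $d$-th strand, when $M$ is Koszul (so each strand is acyclic in positive homological degrees), with a shift of the minimal free resolution of a module that one then compares with $M_{\langle d\rangle}$, typically via an induction on the number of generating degrees using the sequences $0\to M_{\langle a\rangle}\to M\to M/M_{\langle a\rangle}\to 0$ in the reverse direction (for which one must first show that $M$ Koszul forces $M_{\langle a\rangle}$ Koszul and $M/M_{\langle a\rangle}$ Koszul). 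None of this is carried out in your proposal, so the converse implication remains unproven; as it stands you have only half the theorem.
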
 
We will give a criterion for Koszul modules over a local ring $R$. First we introduce the following generalization of ideals with linear quotients \cite[Section 1]{HT}. The later are an ideal-theoretic analog of rings with Koszul filtrations.
\begin{defn}[Modules with linear quotients]
\label{defn_linear_quotients}
Let $M\neq 0$ be a finitely generated $R$-module with a minimal system of generators 
$m_1,\ldots,m_t$. Let $I_i=(m_1,\ldots,m_{i-1}):_R m_i$. We say that $M$ has 
linear quotients if for each $i=1,\ldots,t$, the cyclic module $R/I_i$ is a 
Koszul module.
\end{defn}
In view of R\"omer's theorem \ref{thm_Roemer}, the following result is a generalization of \cite[Theorem 3.7]{LZ}, \cite[Corollaries 2.4, 2.7]{SV}, \cite[Proposition 3.7]{M}. A notable feature is that no assumption on the ring is needed, while in the three results just cited, $R$ has to be at least a Koszul algebra.
\begin{prop}
\label{linear_quotients}
Let $M\neq 0$ be a module with linear quotients with a minimal system of generators $m_1,\ldots,m_t$ as in Definition \ref{defn_linear_quotients}. Then each of the submodule $(m_1,\ldots,m_i)$ of $M$ is a Koszul module for $1\le i \le t$. In particular, $M$ is a Koszul module.

Moreover, we have
\begin{align*}
&\beta_s(M)=\sum_{i=1}^{t}\beta_s(R/I_i) ~\textnormal{for all $s\ge 0$},\\
&\projdim_R M=\max_{1\le i\le t}\{\projdim_R (R/I_i)\}.
\end{align*}
If $R$ is a graded algebra, $M$ a graded module, $\deg m_i=d_i$ for $1\le i\le t$, then we also have
\begin{align*}
&\beta_{s,j}(M)=\sum_{i=1}^{t}\beta_{s,j-d_i}(R/I_i) ~\textnormal{for all $s,j\ge 0$},\\
&\reg_R M=\max_{1\le i\le t}\{\reg_R (R/I_i) + d_i\}.
\end{align*}
\end{prop}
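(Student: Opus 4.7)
The plan is to induct on $i$ and show that each submodule $M_i := (m_1, \ldots, m_i) \subseteq M$ is a Koszul $R$-module. The base case $i = 1$ is immediate: $M_1$ is cyclic, isomorphic to $R/I_1$ (respectively $(R/I_1)(-d_1)$ in the graded case), which is Koszul by hypothesis. For the inductive step I will use the short exact sequence
\[
0 \longrightarrow M_{i-1} \longrightarrow M_i \longrightarrow M_i/M_{i-1} \longrightarrow 0,
\]
where $M_i/M_{i-1}$ is cyclic, generated by the class of $m_i$ with annihilator exactly $I_i$, so $M_i/M_{i-1} \cong R/I_i$ (shifted by $-d_i$ in the graded case).

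To apply Theorem \ref{pure_extension} with $M' = M_{i-1}$, $P' = M_i$, $N' = R/I_i$ I need to verify its two hypotheses. That $M'$ is Koszul is the induction hypothesis. For the condition $M_{i-1} \cap \mm M_i = \mm M_{i-1}$, which amounts to injectivity of the natural map $M_{i-1}/\mm M_{i-1} \to M_i/\mm M_i$, I observe that since $\overline{m_1}, \ldots, \overline{m_t}$ form a $k$-basis of $M/\mm M$, the injection $M_i/\mm M_i \hookrightarrow M/\mm M$ (and likewise for $M_{i-1}$) shows that $m_1, \ldots, m_{i-1}$ and $m_1, \ldots, m_i$ are minimal generating sets of $M_{i-1}$ and $M_i$ respectively; the map in question then sends the basis $\overline{m_1}, \ldots, \overline{m_{i-1}}$ of $M_{i-1}/\mm M_{i-1}$ to the first $i-1$ vectors of the basis $\overline{m_1}, \ldots, \overline{m_i}$ of $M_i/\mm M_i$, hence is injective.

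With both hypotheses verified and $N' = R/I_i$ Koszul by assumption, the inequality $\lind_R P' \le \lind_R N'$ of Theorem \ref{pure_extension} yields $\lind_R M_i = 0$, closing the induction; in particular $M = M_t$ is Koszul. Moreover, the proof of Theorem \ref{pure_extension} establishes that in this setting $d_{M'} = 0$, i.e.\ $\Tor^R_s(k, M_{i-1}) \to \Tor^R_s(k, M_i)$ is injective for all $s \ge 0$, so the long exact sequence of Tor degenerates into short exact sequences
\[
0 \longrightarrow \Tor^R_s(k, M_{i-1}) \longrightarrow \Tor^R_s(k, M_i) \longrightarrow \Tor^R_s(k, R/I_i) \longrightarrow 0,
\]
exactly as exploited in Corollary \ref{reg_pure_extension}. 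Reading off ranks in each homological (and, in the graded case, each internal) degree and telescoping from $i=1$ to $i=t$ yields the Betti number formulas $\beta_s(M) = \sum_i \beta_s(R/I_i)$ and $\beta_{s,j}(M) = \sum_i \beta_{s, j - d_i}(R/I_i)$. The projective dimension and regularity identities then follow by taking the maximum over $s$ (respectively $j - s$) of those summands that are nonzero.

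The technical heart of the argument is the verification of condition (ii) of Theorem \ref{pure_extension}, for which the key input is that a minimal generating set of $M$ automatically restricts to a minimal generating set of each $M_i$; everything else is an inductive repackaging. The main bookkeeping subtlety, in the graded case, is to track the degree shift $d_i$ in the identification $M_i/M_{i-1} \cong (R/I_i)(-d_i)$ when extracting the graded Betti and regularity formulas.
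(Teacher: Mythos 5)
Your proposal is correct and follows essentially the same route as the paper: induction on $i$ via the exact sequence $0\to M_{i-1}\to M_i\to R/I_i\to 0$, verifying $M_{i-1}\cap\mm M_i=\mm M_{i-1}$ from minimality of the generators, applying Theorem \ref{pure_extension}, and then reading the Betti, projective dimension, and regularity formulas off the resulting short exact sequences of Tor (with the shift $(R/I_i)(-d_i)$ in the graded case). Your verification of condition (ii) is in fact slightly more detailed than the paper's, which simply asserts it from minimality.
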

\begin{proof}
Denote $M_i=(m_1,\ldots,m_i)$. Observe that $(m_i)/\left((m_i)\cap M_{i-1}\right)=(m_i)/I_im_i \cong R/I_i$ for each $1\le i\le t$. In fact, this follows since if $xm_i \in I_im_i \subseteq (m_1,\ldots,m_{i-1})$ then $x\in (m_1,\ldots,m_{i-1}):_R m_i=I_i$. Since $m_1,\ldots,m_t$ are a minimal system of generators, we have $M_{i-1}\cap \mm M_i=\mm M_{i-1}$. Therefore using induction on $i$, the short exact sequence
\[
0\to M_{i-1} \to M_i \to R/I_i \to 0,
\]
and Theorem \ref{pure_extension}, we conclude that $M_i$ is a Koszul module for every $1\le i\le t$. 

For the remaining statements, we note that from the proof of Corollary \ref{reg_pure_extension}, the induced sequence
\[
0\longrightarrow \Tor^R_s(k,M_{i-1})\longrightarrow \Tor^R_s(k,M_i) \longrightarrow \Tor^R_s(k,R/I_i) \longrightarrow 0
\]
is exact for every $i$ and every $s$. In the graded case, we use the corresponding facts for the exact sequence
\[
0\to M_{i-1} \to M_i \to (R/I_i)(-d_i) \to 0.
\]
The proof is finished.
\end{proof}

To illustrate the filtration techniques of Theorem \ref{Koszul_filtration} and Proposition 
\ref{linear_quotients}, we present a slight improvement of a result due to 
Avramov, Iyengar and \c{S}ega (which in the notation of the next result corresponds to the case $\qq$ is a 
principal ideal).
\begin{prop}[See {\cite[Theorems 1.1, 3.2]{AIS}}]
\label{Conca_gen}
Let $(R,\mm,k)$ be a local ring. Let $\qq \subseteq \mm$ be an ideal such that $\mm^2=\qq \mm$ and $\qq^2=0$. Let $y_1,\ldots,y_e$ be a minimal generating set of $\qq$ where $y_i\in \mm$. Then the collection of ideals
\[
\Fc=\{0,(y_1),(y_1,y_2),\ldots,(y_1,\ldots,y_{e-1})\} ~ \bigcup ~ \{\textnormal{$I\subseteq \mm$: $I$ contains $\qq$}\}
\]
is a Koszul filtration for $R$. Moreover, any non-trivial finitely generated $R$-module $M$ that satisfies the condition $\qq M=0$ is a Koszul module.
\end{prop}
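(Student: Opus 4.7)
The plan is to verify each of the axioms (F1)--(F3) for $\Fc$ directly, and then derive the \emph{moreover} statement from Theorem \ref{Koszul_filtration}(i) together with Proposition \ref{linear_quotients}. The crucial preliminary observation is that $\mm^3 = 0$: indeed
\[
\mm^3 = \mm \cdot \mm^2 = \mm \cdot \qq\mm = \qq\mm^2 = \qq \cdot \qq\mm = \qq^2 \mm = 0.
\]
Consequently $I \cap \mm^{s+1} = 0 = \mm^s I$ for every $s \geq 2$ and every $I \subseteq \mm$, so only the case $s = 1$ of (F2) needs attention. Axiom (F1) is clear because $\mm \supseteq \qq$.

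For (F2) at $s = 1$ I would split according to the two families comprising $\Fc$. If $\qq \subseteq I \subseteq \mm$, then $\mm^2 = \mm\qq \subseteq \mm I \subseteq \mm^2$ forces $\mm I = \mm^2$, and since $\mm^2 = \qq\mm \subseteq \qq \subseteq I$ we get $I \cap \mm^2 = \mm^2 = \mm I$. If $I = (y_1, \ldots, y_j)$ with $1 \leq j \leq e - 1$, the minimality of $y_1, \ldots, y_e$ as generators of $\qq$ yields linear independence of their residues in $\qq/\mm\qq$. The composition $I \hookrightarrow \qq \twoheadrightarrow \qq/\mm\qq$ then has image the $j$-dimensional subspace spanned by $\bar y_1, \ldots, \bar y_j$ and kernel $I \cap \mm\qq = I \cap \mm^2$. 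It factors through $I/\mm I$, a $k$-space of dimension at most $j$, and the resulting factored map $I/\mm I \to \qq/\mm\qq$ surjects onto its $j$-dimensional image from a source of dimension at most $j$, hence is injective. Its kernel being zero means precisely $I \cap \mm^2 = \mm I$.

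For (F3), if $I = (y_1, \ldots, y_j)$ with $j \leq e - 1$ I would take the filtration $0 \subset (y_1) \subset \cdots \subset I$. Each colon ideal $(y_1, \ldots, y_{i-1}) :_R y_i$ contains $\qq$ because $\qq y_i \subseteq \qq^2 = 0$, and is contained in $\mm$ since if a unit killed $y_i$ modulo $(y_1, \ldots, y_{i-1})$ then $y_i$ would lie in that ideal, contradicting the linear independence of $\bar y_1, \ldots, \bar y_e$; hence this colon lies in the second family of $\Fc$. For $I \subseteq \mm$ containing $\qq$, I would first prolong the above filtration one more step to reach $\qq = (y_1, \ldots, y_e)$ (which now belongs to $\Fc$ as an ideal containing $\qq$), and then refine $\qq \subseteq I$ by adjoining one element at a time. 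Every intermediate ideal contains $\qq$ and sits inside $\mm$, and every colon ideal contains $\qq$ by the same $\qq^2 = 0$ argument, and is proper by the choice of the adjoined element.

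For the \emph{moreover} part, let $m_1, \ldots, m_t$ minimally generate $M$ and set $I_i = (m_1, \ldots, m_{i-1}) :_R m_i$. Because $\qq M = 0$ one has $\qq \subseteq I_i$, and by minimality of the generating set no unit kills $m_i$ modulo $(m_1, \ldots, m_{i-1})$, so $I_i \subseteq \mm$. Hence $I_i \in \Fc$, and Theorem \ref{Koszul_filtration}(i) gives that $R/I_i$ is a Koszul $R$-module. Thus $M$ has linear quotients in the sense of Definition \ref{defn_linear_quotients}, and Proposition \ref{linear_quotients} concludes that $\lind_R M = 0$. The main obstacle in the argument is the injectivity of $I/\mm I \hookrightarrow \qq/\mm\qq$ used in (F2); once that elementary linear-algebra fact is extracted from the minimality hypothesis, everything else reduces to bookkeeping with $\qq^2 = 0$ and the two families comprising $\Fc$.
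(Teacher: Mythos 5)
Your proposal is correct and follows essentially the same route as the paper: verify (F1)--(F3) using $\mm^3=0$ and the two families separately, build the filtration $0\subset(y_1)\subset\cdots\subset\qq\subset\qq+(z_1)\subset\cdots\subset I$, and deduce the \emph{moreover} part by showing $\qq\subseteq I_i\subseteq\mm$ so that $M$ has linear quotients. The only (harmless) stylistic difference is that for (F2) on $I=(y_1,\ldots,y_j)$ you use a dimension count on $I/\mm I\to\qq/\mm\qq$ where the paper manipulates an explicit relation $\sum(r_j-s_j)y_j-\sum s_\ell y_\ell=0$; both rest on the linear independence of $\bar y_1,\ldots,\bar y_e$ in $\qq/\mm\qq$.
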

\begin{proof}
The case $\qq=(0)$ is trivial as the reader may check, so we assume that $\qq \neq 0$. Clearly $\Fc$ contains $(0)$ and $\mm$. We begin by checking the condition (F2) for Koszul filtrations. Firstly consider the case $I\neq (0)$ is an ideal containing $\qq$. As $\mm^3=(0)$, the condition is trivial for $s\ge 2$. For $s=1$, $\mm^2\subseteq \qq \subseteq I$, hence $\mm^2\cap I=\mm^2=\mm \qq \subseteq \mm I \subseteq \mm^2 \cap I$. In particular, all containments in the last string are in fact equalities. 

Next consider the case $I=(y_1,\ldots,y_i)$ where $1\le i\le e-1$. Take $x\in I\cap \mm^2=I\cap \qq \mm$, then $x=r_1y_1+\cdots+r_iy_i=s_1y_1+\cdots+s_ey_e$ where $r_i\in R, s_i\in \mm$. Then we have $(r_1-s_1)y_1+\cdots+(r_i-s_i)y_i-s_{i+1}y_{i+1}-\cdots-s_ey_e=0$. But $y_1,\ldots,y_e$ are linearly independent modulo $\mm \qq$, therefore $r_j-s_j\in \mm$ for all $1\le j\le i$. Hence $r_j\in \mm$ for all $1\le j \le i$, and so $x\in \mm I$, as desired.
 
Now we verify condition (F3). Let $I\subseteq \mm$ be an ideal of $R$ containing $\qq$. Let $z_1,\ldots,z_n$ be an irredundant set of elements of $I$ such that $I=\qq+(z_1,\ldots,z_n)$. Define $I_0=(0), I_1=(y_1),I_2=(y_1,y_2),\ldots,I_e=(y_1,\ldots,y_e)=\qq,I_{e+1}=\qq+(z_1),\ldots,I_{e+n}=\qq+(z_1,\ldots,z_n)=I$. Observe that $I_j:y_{j+1}$ are proper ideals containing $\qq$ for $0\le j\le e$ and $I_{e+t-1}:z_t=\mm$ for  $1\le t\le n$. This argument also implies that the condition (F3) holds if $I$ is among ideals of the type $(y_1,\ldots,y_i)$ where $1\le i\le e-1$. Hence $\Fc$ is a Koszul filtration of $R$. In particular, $R/I$ is a Koszul $R$-module if $I\subseteq \mm$ is an ideal containing $\qq$.

Let $M$ be a finitely generated $R$-module with $\qq M=0$. Let $m_1,\ldots,m_t$ 
be a minimal system of generators of $M$. Immediately, we get 
$(m_1,\ldots,m_{i-1}):m_i$ is a proper ideal containing $\qq$ for each 
$i=1,\ldots,t$. Therefore by the first part of the result, $M$ has linear
quotients. In particular, $M$ is a Koszul module by Proposition 
\ref{linear_quotients}.
\end{proof}
\begin{rem}
Note that  in the previous result, if $\qq$ is a principal ideal, using the 
machinery in \cite{AIS} one obtains more information about modules over the 
local ring $R$: every finitely generated $R$-module has a Koszul syzygy module, 
i.e. $R$ is absolutely Koszul.
\end{rem}
We have checked and would like to inform the reader that there are a number of 
other results concerning Koszul rings and modules that can be proved using 
filtration arguments, for example the main results of Ahangari Maleki in \cite{Ah} 
(except those concerning regularity). To keep the exposition coherent, we decide 
to leave further details to the interested reader.


\subsection*{Intersection of three linear ideals}

In this subsection, let $R$ be a polynomial ring over $k$. We say a homogeneous ideal of 
$R$ is a {\em linear ideal} if it is generated by linear forms. In general, an 
intersection of four linear ideals is not Koszul: 
$$(xy,zt)=(x,z)\cap (x,t)\cap (y,z)\cap (y,t).$$
The main theorems of Section \ref{sect_Koszul_modules} together with a result 
Derksen-Sidman \cite{DS} give the following statement for the intersection of 
three linear ideals. 
\begin{thm}
\label{thm_3ideals}
Let $R=k[x_1,\ldots,x_n]$ be a polynomial ring \textup{(}where $n\ge 0$\textup{)}. Let $I,J,K$ be 
linear ideals of $R$. Then $I\cap J\cap K$ is a Koszul module.
\end{thm}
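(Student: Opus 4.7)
The plan is to bootstrap from the case of two linear ideals and combine the main results of Section \ref{sect_Koszul_modules} with the Derksen--Sidman regularity bound $\reg_R(I\cap J\cap K)\le 3$.

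\emph{Step 1 (two linear ideals).} First I would establish the auxiliary statement that $I\cap J$ is a Koszul $R$-module for any two linear ideals $I,J$. After a linear change of coordinates one may assume $I=(\mathbf{x},\mathbf{y})$ and $J=(\mathbf{x},\mathbf{z})$ with $\mathbf{x},\mathbf{y},\mathbf{z}$ pairwise disjoint sets of variables, giving the decomposition $I\cap J=(\mathbf{x})+(\mathbf{y})(\mathbf{z})$. The summand $(\mathbf{x})$ has a $1$-linear Koszul resolution, while the bipartite product $(\mathbf{y})(\mathbf{z})$ has a $2$-linear resolution, so $I\cap J$ is componentwise linear; R\"omer's Theorem \ref{thm_Roemer}, valid in the Koszul ring $R$, upgrades this to $\lind_R(I\cap J)=0$.

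\emph{Step 2 (reduction and application of Theorem \ref{small_inclusion}).} Let $V=I_1\cap J_1\cap K_1$. The sequence $0\to (V)\to I\cap J\cap K\to \bar I\cap \bar J\cap \bar K\to 0$, where bars denote images in $\bar R=R/(V)$, satisfies the hypotheses of Theorem \ref{pure_extension} (the graded structure forces $(V)\cap\mm^s(I\cap J\cap K)=\mm^s(V)$), and together with Theorem \ref{thm_ld0} this reduces the problem to the case $V=0$. Assuming $V=0$, every element of $I\cap J\cap K$ has degree $\ge 2$ and lies in $K$; since $(\mm K)_d=K_d$ for $d\ge 2$ (as $K$ is a linear ideal), we obtain $I\cap J\cap K\subseteq \mm K$. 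Theorem \ref{small_inclusion} applied to
$$0\to I\cap J\cap K\to K\to K/(I\cap J\cap K)\to 0$$
then yields
$$\lind_R(I\cap J\cap K)\le \max\{0,\,\lind_R(K/(I\cap J\cap K))-1\},$$
reducing the theorem to the claim $\lind_R(K/(I\cap J\cap K))\le 1$.

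\emph{Step 3 (the crucial bound).} I would establish $\lind_R(K/(I\cap J\cap K))\le 1$ as follows. First, Corollary \ref{reg_small_inclusion} applied to the same sequence gives $\reg_R(K/(I\cap J\cap K))\le \max\{\reg_R(I\cap J\cap K)-1,\,\reg_R K\}\le 2$ using the Derksen--Sidman bound. Next, to turn this regularity control into a linearity-defect statement, I would pass to suitable polynomial quotients (for instance $R/J$, where the image of $K$ sits inside the intersection of two linear ideals, which is Koszul by Step~1) via Theorem \ref{thm_ld0}, and then apply Proposition \ref{lind_exact_seq} to propagate the bound back through comparison sequences relating $K/(I\cap J\cap K)$ to its image in $R/J$.

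\emph{Main obstacle.} The technical heart is Step 3: converting the Derksen--Sidman regularity bound into a sharp linearity-defect statement on the quotient $K/(I\cap J\cap K)$, which is neither an ideal nor a module over any natural polynomial quotient of $R$. The change-of-rings Theorem \ref{thm_ld0} is essential, enabling a transition to polynomial subquotients where images of the remaining linear ideals become tractable via the two-ideal case of Step~1.
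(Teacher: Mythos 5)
Your Steps 1 and 2 are fine in themselves: the reduction modulo $V=I_1\cap J_1\cap K_1$ via Theorem \ref{pure_extension} and Theorem \ref{thm_ld0} is essentially the first reduction in the paper's own proof, and the inclusion $I\cap J\cap K\subseteq\mm K$ does place you in the setting of Theorem \ref{small_inclusion}. But observe that Theorem \ref{small_inclusion} applied to $0\to I\cap J\cap K\to K\to K/(I\cap J\cap K)\to 0$ gives an \emph{equivalence}: $\lind_R(I\cap J\cap K)=0$ if and only if $\lind_R\bigl(K/(I\cap J\cap K)\bigr)\le 1$. So Step 2 does not reduce the problem; it merely reformulates it, and the entire burden falls on Step 3.

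Step 3 is a genuine gap, and the mechanism you propose cannot close it. First, a regularity bound does not control the linearity defect: the only implication available is that finite linearity defect forces finite regularity (\cite[Proposition 1.12]{HIy}), and $\reg_R\bigl(K/(I\cap J\cap K)\bigr)\le 2$ says nothing about whether the first syzygy of this module is componentwise linear, which is what $\lind\le 1$ means. Second, the change-of-rings tool is not available where you want it: Theorem \ref{thm_ld0} requires $\lind_R S=0$, but $K/(I\cap J\cap K)$ is not a module over $R/J$ (it is not annihilated by $J$), and the rings it naturally is a module over, such as $R/(I\cap J)$, only satisfy $\lind_R S\le 1$, which by Remark \ref{rem_ld1} is not enough to transfer linearity defects. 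The paper takes a different route at exactly this point: after reducing further so that all pairwise intersections lie in $\mm^2$, it puts $I,J,K$ in the normal form $I=(x_1,\ldots,x_p)$, $J=(y_1,\ldots,y_q)$, $K=(z_1,\ldots,z_r,x_1+y_1,\ldots,x_s+y_s)$ and proves directly (Lemma \ref{lem_special_case}) that $H=I\cap J\cap K$ is componentwise linear: Derksen--Sidman gives $\reg H\le 3$, so $H$ is generated in degrees $2$ and $3$; the component $H_{\left<3\right>}=H_{\ge 3}$ is linear by Eisenbud--Goto, the components in degree $\ge 4$ are handled by Corollary \ref{max_ideal}, and the essential computation identifies $H_{\left<2\right>}$ with the ideal of $2$-minors of the generic matrix with rows $(x_1,\ldots,x_s)$ and $(y_1,\ldots,y_s)$, which has a linear resolution. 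That explicit identification is the heart of the proof and has no counterpart in your proposal.
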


\begin{proof}
Denote $\mm$ the unique graded maximal ideal of $R$. Denote by $\mu(I)$ the 
minimal number of generators of $I$. We use induction on $\mu(I)+\mu(J)+\mu(K)$ 
and $n=\dim R$.
 
If one of the numbers $\mu(I),\mu(J),\mu(K)$ is zero then $I\cap 
J\cap K=(0)$. If $n=0$, then again $I\cap J\cap K=(0)$. Hence we can now 
consider the case $n\ge 1$ and $\mu(I),\mu(J),\mu(K)\ge 1$.

We claim that it is possible to reduce the general situation to the case $I\cap 
J,J\cap K, K\cap I\subseteq \mm^2$.

Firstly, if there exists a linear form $0\neq x\in I\cap J\cap K$, consider the 
exact sequence
$$
0\longrightarrow (x)\longrightarrow I\cap J\cap K \longrightarrow \frac{I\cap J\cap K}{(x)}\longrightarrow 0.
$$
Clearly $(x)\cap \mm(I\cap J\cap K)=\mm (x)$, hence using Theorem 
\ref{pure_extension}, there is an inequality
\[
\lind_R (I\cap J\cap K)\le \lind_R \frac{I\cap J\cap K}{(x)}=\lind_{R/(x)} 
\frac{I\cap J\cap K}{(x)}.
\]
The second equality is due to Theorem \ref{thm_ld0}. By induction on $\dim 
R$, 
$\lind_{R/(x)} (I\cap J\cap K)/(x)=0$. Hence the conclusion is true in this case.

Therefore it is harmless to assume that $I\cap J\cap K$ contains no linear 
forms. With this assumption, $I\cap J\cap K\subseteq \mm^2$. Consider the case 
where one of $I\cap J,I\cap K, J\cap K$ contains a linear forms, say $0\neq 
x\in 
J\cap K$ for $x\in R_1$.

Denote by $\overline{(\cdot)}$ the residue class in $R/(x)$. Look 
at the exact sequence
$$
0\longrightarrow I\cap J\cap K\longrightarrow I\longrightarrow \frac{I+J\cap K}{J\cap K}\longrightarrow 0.
$$
We have $I\cap J\cap K\subseteq \mm^2\cap I=\mm I$, hence by Theorem 
\ref{small_inclusion}, we get
$$
\lind_R (I\cap J\cap K)\le \max\{0,\lind_R \frac{I+J\cap K}{J\cap 
K}-1\}=\max\{0,\lind_{R/(x)} \frac{\overline{I+(x)}+\overline{J}\cap 
\overline{K}}{\overline{J}\cap 
\overline{K}}-1\}.
$$
The equality is due to Theorem \ref{thm_ld0}. On the other hand, arguing 
similarly for the following exact sequence in $R/(x)$
\[
0\longrightarrow \overline{I+(x)}\cap \overline{J}\cap \overline{K} \longrightarrow \overline{I+(x)} 
\longrightarrow \frac{\overline{I+(x)}+\overline{J}\cap 
\overline{K}}{\overline{J}\cap 
\overline{K}} \longrightarrow 0,
\]
we see that 
$$\lind_{R/(x)} \frac{\overline{I+(x)}+\overline{J}\cap 
\overline{K}}{\overline{J}\cap 
\overline{K}} \le \lind_{R/(x)} (\overline{I+(x)}\cap \overline{J}\cap 
\overline{K})+1=1,$$ 
with the equality following from the induction hypothesis on $\dim R$. 
Therefore $\lind_R (I\cap J\cap K)=0$ in 
this case as well. Summing up, we have reduced the general situation to the 
case when $I\cap J,I\cap K, J\cap K$ are all contained in $\mm^2$.

For any $1\le p\le n$, let $\Span(x_1,\ldots,x_p)$ be the $k$-vector subspace of $R_1$ generated by 
$x_1,\ldots,x_p$. By change of coordinates, we can assume that $I,J,K$ are minimally generated as 
follow
\begin{align*}
&I=(x_1,\ldots,x_p),\\
&J=(y_1,\ldots,y_q),\\
&K=(z_1,\ldots,z_r,a_1+b_1,\ldots,a_s+b_s),
\end{align*}
where $a_i\in \Span(x_1,\ldots,x_p),b_i\in \Span(y_1,\ldots,y_q)$, and 
$x_1,\ldots,x_p,y_1,\ldots,y_q,z_1,\ldots,z_r$ are linearly independent. 

Since $(J\cap K)_1=0$, we get that $a_1,\ldots,a_s$ linearly independent. Hence 
by change of coordinates, we can assume that $a_i=x_i$. Similarly, we can 
assume that $b_i=y_i$. Hence it remains to consider the case
\begin{align*}
&I=(x_1,\ldots,x_p),\\
&J=(y_1,\ldots,y_q),\\
&K=(z_1,\ldots,z_r,x_1+y_1,\ldots,x_s+y_s),
\end{align*}
where $s\le \min\{p,q\}$. 

This is the content of Lemma \ref{lem_special_case} below. The proof of the 
theorem is completed.
\end{proof}
The final difficulty in the proof of Theorem \ref{thm_3ideals} is resolved by
\begin{lem}
\label{lem_special_case}
Let $R=k[x_1,\ldots,x_p,y_1,\ldots,y_q,z_1,\ldots,z_r]$ be a polynomial ring 
\textup{(}where $p,q,r\ge 0$\textup{)}. Then for any $s\le \min\{p,q\}$, the ideal
$$
(x_1,\ldots,x_p)\cap (y_1,\ldots,y_q)\cap 
(x_1+y_1,\ldots,x_s+y_s,z_1,\ldots,z_r)
$$
is a Koszul module. 
\end{lem}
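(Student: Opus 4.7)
The plan is to show that $L := I \cap J \cap K$ is componentwise linear as a graded $R$-module; since $R$ is a Koszul polynomial ring, R\"omer's Theorem \ref{thm_Roemer} will then deliver $\lind_R L = 0$. Writing $u_l = x_l + y_l$, note that the Derksen-Sidman bound invoked in the preceding proof of Theorem \ref{thm_3ideals} gives $\reg_R L \le 3$, so $L$ is generated in degrees $2$ and $3$, and componentwise linearity reduces to showing that $L_{\langle 2\rangle}$ has a $2$-linear resolution and $L_{\langle 3\rangle}$ has a $3$-linear resolution.

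The key structural claim is $L = L_{\langle 2\rangle} + IJK$, where $L_{\langle 2\rangle}$ is the ideal generated by the $2 \times 2$ minors $v_{ij} = x_iy_j - x_jy_i$ (for $1 \le i < j \le s$) of the matrix $\bigl(\begin{smallmatrix} x_1 & \cdots & x_s \\ y_1 & \cdots & y_s \end{smallmatrix}\bigr)$. The inclusion $\supseteq$ is immediate from $v_{ij} = x_i u_j - x_j u_i \in I \cap J \cap K$ and $IJK \subseteq I \cap J \cap K$. For the reverse, I would compute $L/IJK \cong \Tor_1^R(R/IJ, R/K)$ and show it is concentrated in degree $2$ with dimension $\binom{s}{2}$. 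Since $z_1, \ldots, z_r$ is regular on $R/IJ$ and annihilates $R/K$, the standard change-of-rings collapse reduces this Tor to $\Tor_1^{R''}(R''/IJ, R''/K_1)$, where $R'' = k[x,y]$ and $K_1 = (u_1, \ldots, u_s)$. The Mayer-Vietoris sequence $0 \to R''/IJ \to R''/I \oplus R''/J \to k \to 0$ combined with the vanishing $\Tor_i^{R''}(R''/I, R''/K_1) = \Tor_i^{R''}(R''/J, R''/K_1) = 0$ for $i \ge 1$ (both $\{x_i\}$ and $\{y_j\}$ descend to distinct variables of the polynomial ring $R''/K_1$) yields $L/IJK \cong \Tor_2^{R''}(k, R''/K_1)$; since $R''/K_1$ is a complete intersection of linear forms, its Koszul resolution shows this group is concentrated in internal degree $2$ with dimension $\binom{s}{2}$---matching exactly $(L_{\langle 2\rangle})_2 = \Span\{v_{ij}\}$. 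Combined with $(IJK)_{\le 2} = 0$, this forces $L_d = (IJK)_d$ for $d \ne 2$ and $L_2 = (L_{\langle 2\rangle})_2$, giving the structural claim.

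Componentwise linearity of $L$ then follows from three classical facts together with one direct verification: (i) $L_{\langle 2\rangle}$ is the ideal of maximal minors of a $2 \times s$ matrix of distinct variables, so it has a $2$-linear resolution via the Eagon-Northcott complex; (ii) $IJK$ is a product of three ideals generated by linear forms in a polynomial ring, so by Conca-Herzog (or Derksen-Sidman \cite{DS}) it has a $3$-linear resolution; (iii) writing $v_{ij} = x_i u_j - x_j u_i = y_j u_i - y_i u_j$, one checks at once that $x_k v_{ij}$, $y_k v_{ij}$, $z_k v_{ij} \in IJK$ for all indices, so $\mm\, L_{\langle 2\rangle} \subseteq IJK$. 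Together with the structural identity, these give $L_{\langle 3\rangle} = IJK$ and, for $d \ge 4$, $L_{\langle d\rangle} = \mm^{d-3}\,IJK$, each of which has a $d$-linear resolution as a $(d-3)$-fold truncation of a Koszul module. The main obstacle is the structural identity $L = L_{\langle 2\rangle} + IJK$: the ``surprise'' degree-$2$ generators $v_{ij}$ of $L$ are invisible to the combinatorics of $IJ$ and $K$ individually, and identifying them (and nothing else) as the sole missing piece is what forces the Tor calculation sketched above.
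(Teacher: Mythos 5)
Your proof is correct, and although it shares the paper's outer frame (R\"omer's Theorem \ref{thm_Roemer}, the Derksen--Sidman bound $\reg H\le 3$ to reduce to the components in degrees $2$ and $3$, and the identification of the degree-$2$ component with the $2$-minors of the $2\times s$ matrix with rows $(x_1,\ldots,x_s)$ and $(y_1,\ldots,y_s)$), the two key sub-steps are carried out by genuinely different means. For the degree-$2$ component the paper argues directly: it reduces to $p=q=s$, $r=0$, puts a $\Z^s$-multigrading on $R$ with $\deg x_i=\deg y_i=e_i$, and checks by hand that every multihomogeneous quadric in $H$ is a scalar multiple of a minor. You instead count dimensions homologically, via $I\cap J=IJ$, the identification $L/IJK\cong\Tor_1^R(R/IJ,R/K)$, reduction modulo the regular sequence $z_1,\ldots,z_r$, and a Mayer--Vietoris sequence landing on $\Tor_2^{R''}(k,R''/(u_1,\ldots,u_s))\cong k(-2)^{\binom{s}{2}}$; the vanishing of $\Tor_{\ge 1}^{R''}(R''/I,R''/K_1)$ and $\Tor_{\ge 1}^{R''}(R''/J,R''/K_1)$ that this requires does hold, since $x_1,\ldots,x_p$ and $y_1,\ldots,y_q$ each remain regular sequences modulo $(x_1+y_1,\ldots,x_s+y_s)$. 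For the degree-$3$ component the paper simply observes that $H_{\langle 3\rangle}=H_{\ge 3}$ has a linear resolution because $\reg H\le 3$ (the Eisenbud--Goto truncation fact it cites), whereas you identify $L_{\langle 3\rangle}=IJK$ explicitly and invoke the Conca--Herzog theorem that products of linear ideals have linear resolutions --- note that \cite{DS}, which you offer as an alternative citation there, concerns intersections rather than products and does not supply this. Your route is longer and imports one extra external result, but it yields the sharper structural identity $L=(\text{$2$-minors})+IJK$ with an exact count of the degree-$2$ generators; the paper's route avoids the Tor computation at the cost of the ad hoc multigrading argument.
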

\begin{proof}
Denote $H=(x_1,\ldots,x_p)\cap (y_1,\ldots,y_q)\cap 
(x_1+y_1,\ldots,x_s+y_s,z_1,\ldots,z_r)$. By Theorem \ref{thm_Roemer}, we are 
left with proving that $H_{\left<c\right>}=(a\in H: \deg a=c)$ has 
$c$-linear resolution for all $c\in \Z$. 

Denote $\mm=R_+$. By \cite[Theorem 2.1]{DS}, $\reg H\le 3$, so in particular 
$H$ 
is generated in degree $2$ and $3$. The last fact implies that 
$H_{\left<c\right>}=\mm H_{\left<c-1\right>}$ for all $c\ge 4$. Hence by 
Corollary \ref{max_ideal}, it is enough to show that $H_{\left<2\right>}$ and 
$H_{\left<3\right>}$ have linear resolutions. Note that as $\reg H\le 3$,
$$
H_{\left<3\right>}=H_{\ge 3}=(m\in H: \deg m\ge 3)
$$
has linear resolution by a well-known result of Eisenbud and Goto \cite[Theorem 1.2(1)]{EG}. Hence we are 
left with $H_{\left<2\right>}.$

We will show that
$H_{\left<2\right>}$ equals $L$, the ideal of 2-minors of the following 
generic matrix
\[
\left(\begin{matrix}
x_1 & x_2 & \ldots & x_s\\
y_1 & y_2 & \ldots & y_s
\end{matrix}
\right).
\]
This implies the desired conclusion. Clearly $L\subseteq H_{\left<2\right>}$, 
since for all $1\le i<j\le s$, the following equality holds
\[
x_iy_j-x_jy_i=(x_i+y_i)y_j-(x_j+y_j)y_i\in (x_1,\ldots,x_s)\cap 
(y_1,\ldots,y_s)\cap (x_1+y_1,\ldots,x_s+y_s)\subseteq H.
\]
We will show that $H_{\left<2\right>}\subseteq L$. 

Denote $H'=(x_1,\ldots,x_p)\cap (y_1,\ldots,y_q)\cap (x_1+y_1,\ldots,x_s+y_s)$ then $H'\subseteq H$. We claim that $H'_{\left<2\right>}=H_{\left<2\right>}$. The left-hand side is clearly contained in the right-hand one. Note that $H\subseteq (x_1,\ldots,x_p)(y_1,\ldots,y_q)$ so any minimal generator $f$ of $H_{\left<2\right>}$ is a $k$-linear combination of $x_1y_1, x_1y_2,\ldots,x_1y_q,\ldots,x_py_q$. Since $f\in (x_1+y_1,\ldots,x_s+y_s,z_1,\ldots,z_r)$, a Gr\"obner basis argument using a suitable elimination order gives that $f\in H\cap (x_1+y_1,\ldots,x_s+y_s)=H'$. Hence $H'_{\left<2\right>}=H_{\left<2\right>}$.

Repeating the same argument, we see that
$$
H_{\left<2\right>}=\left((x_1,\ldots,x_s)\cap (y_1,\ldots,y_s)\cap (x_1+y_1,\ldots,x_s+y_s)\right)_{\left<2\right>}.
$$
In other words, we can assume that $p=q=s, r=0$. Equip the gradings for the variables of $R$ 
as follow: $\deg x_i=\deg y_i=(0,\ldots,0,1,\ldots,0)$, the $i$-th standard 
basis vector of $\Z^s$. Then $H, L$ are $\Z^s$-graded with respect to this 
grading, furthermore, the $\Z^s$-grading is compatible with the usual 
$\Z$-grading. 

Take $a\in H_{\left<2\right>}$ a $\Z^s$-graded element of degree $2$. Then 
taking into account the fact that 
$$H\subseteq (x_1,\ldots,x_s)\cap 
(y_1,\ldots,y_s)=(x_1,\ldots,x_s)(y_1,\ldots,y_s),$$
$a$ has the form $\alpha x_iy_j-\beta x_jy_i$ for some $1\le i<j\le s$ and 
$\alpha,\beta\in k$. As $\alpha x_iy_j-\beta x_jy_i \in 
(x_1+y_1,\ldots,x_s+y_s)$, degree considerations yield that
\[
\alpha x_iy_j-\beta x_jy_i \in (x_i+y_i,x_j+y_j) 
\] 
and by further simple calculations, we get $\alpha=\beta$. Hence 
$a=\alpha(x_iy_j-x_jy_i)\in L$, as desired.
\end{proof}

\section*{Acknowledgements}
We would like to thank Aldo Conca, Srikanth Iyengar and Tim R\"omer for some 
inspiring discussions related to the content of this paper. We are indebted to the referee for the attentive reading 
of a previous version of this paper, and more importantly, for the many thoughtful suggestions that have lead to substantial improvement in the presentation.

\end{document}